\numberwithin{equation}{section}
\theoremstyle{plain}
\newtheorem{theorem}{Theorem}[section]
\newtheorem{lemma}[theorem]{Lemma}
\newtheorem{corollary}[theorem]{Corollary}
\newtheorem{remark}{Remark}[section]
\renewcommand{\maketitle}{
	\begin{center}
		{\Large\bfseries{\@title}\par}
		\vskip 1em
		{\normalsize
			\lineskip .5em
			\begin{tabular}[t]{c}
				\@author
			\end{tabular}\par}
		\vskip 1.5em
	\end{center}
}
\renewenvironment{abstract}{
	\begin{adjustwidth}{1.3cm}{1.3cm}
		\noindent{\large\bfseries{A{\scriptsize BSTRACT.}}}
	}{
	\end{adjustwidth}
}
\begin{document}
	
	\title{Arithmetic Properties of Colored Partitions Restricted by Parity of the Parts}
	
	\author{M. P. Thejitha, James A. Sellers, and S. N. Fathima}
	
	\maketitle
	
	\begin{abstract}
		Let $a_{r,s}(n)$ denote the number of mutlicolored  partitions of $n$, wherein both even parts and odd parts may appear in one of $r$-colors and $s$-colors, respectively, for fixed $r,s\ge 1$. The paper aims to study arithmetic properties satisfied by $a_{r,s}(n)$, using elementary generating function manipulations and classical $q$-series techniques.\\
		
		\noindent {\bf \small Keywords:} Partitions, Colored partition, Congruence, Generating function.\\
		
		\noindent {\bf \small Mathematics Subject Classification (2020):} 05A17, 11P83.
	\end{abstract}
	
	\bigskip
	
	\vspace{0.5em}
	\section{Introduction}	
	An integer partition $\lambda$ is a weakly decreasing sequence of positive integers. These positive integers are called the parts of $\lambda$. We denote by $|\lambda|$, the sum of all parts of $\lambda$. If $|\lambda|=n$, then $\lambda$ is called a partition of $n$. The generating function for the number of unrestricted partitions of $n$, denoted by $p(n)$, is given by
	\begin{align*}
		\sum_{n=0}^{\infty}p(n)q^n= \dfrac{1}{(q;q)_\infty}.
	\end{align*}
	Here and throughout this paper, we assume $|q|<1$ and for positive integer $n$, we use the standard notation
	\begin{align*}
		(a;q)_0:=1,\;\;(a;q)_n:=\prod_{j=0}^{n-1}(1-aq^j),	\;\; \text{and} \;\;(a;q)_\infty = \prod_{n=0}^\infty(1-aq^{n}).
	\end{align*}
	A partition of a positive integer $n$ has $k$ colors if there are $k$ copies of each part size available and all of them are viewed as distinct objects. A partition of a positive integer into parts with colors is called a ``colored partition". We denote by $p_k(n)$, the number of $k$-colored partitions of $n$, and agree that $p_k(0)=1$. The generating function of $p_k(n)$ satisfies the identity
	\begin{align*}
		\sum_{n=0}^{\infty}p_k(n)q^n=\dfrac{1}{f_1^k}\; ,
	\end{align*}
	where $ f_m:= (q^m;q^m)_\infty$.\\
	\indent	In a series of papers, Hirschhorn and Sellers \cite{hs1}, Sellers \cite{sellers}, and Thejitha and Fathima \cite{thfa} studied congruence properties of partitions wherein even parts come in only one color, while the odd parts may appear in one of $s$ colors for fixed $s\ge1$. The number of such partitions of $n$ is denoted by $a_s(n)$, and it is an easy combinatorial exercise \cite{hs1} to determine that the generating function for $a_s(n)$ is
	\begin{align*}
		\sum_{n=0}^{\infty}a_s(n)q^n=\dfrac{f_2^{s-1}}{f_1^s}\; .
	\end{align*}
	In \cite{hs1}, Hirschhorn and Sellers  proved the following families of congruences modulo 7 using elementary $q$-series identities and straightforward generating function manipulations.
	\begin{theorem}[{{\cite[Corollary~3.1]{hs1}}}]
		For $j\ge0$ and all $n\ge0$,
		\begin{align*}
			a_{7j+1}(7n+5)\equiv 0 \pmod{7}\\
			a_{7j+3}(7n+2)\equiv 0 \pmod{7}\\
			a_{7j+4}(7n+4)\equiv 0 \pmod{7}\\
			a_{7j+5}(7n+6)\equiv 0 \pmod{7}\\
			a_{7j+7}(7n+3)\equiv 0 \pmod{7}.
		\end{align*}
	\end{theorem}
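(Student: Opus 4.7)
The proof will have a clean two-stage structure: first we reduce the infinite family $s = 7j+t$ to the single base case $j=0$, and then we verify the five base congruences individually.

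\textbf{Stage 1 (Reduction).} From the generating function $\sum_n a_s(n)q^n = f_2^{s-1}/f_1^s$, write $s = 7j+t$ and apply the Frobenius-type congruence $f_m^{7} \equiv f_{7m} \pmod 7$ (an immediate consequence of $(1-x)^7 \equiv 1-x^7 \pmod 7$). This gives
\begin{align*}
\sum_{n\ge 0} a_{7j+t}(n)\,q^n \;=\; \left(\frac{f_2^{7}}{f_1^{7}}\right)^{\! j}\cdot \frac{f_2^{t-1}}{f_1^{t}} \;\equiv\; \left(\frac{f_{14}}{f_7}\right)^{\! j} \sum_{n\ge 0} a_t(n)\, q^n \pmod 7.
\end{align*}
Since $f_{14}/f_7 = 1/\bigl((q^7;q^{14})_\infty\bigr)$ is a power series in $q^7$, the coefficient of $q^{7n+r}$ on the left-hand side is a $\mathbb{Z}/7\mathbb{Z}$-linear combination of the coefficients $a_t(7m+r)$ with $m\le n$. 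Consequently, the desired family will follow for all $j\ge 0$ once the $j=0$ case is proved.

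\textbf{Stage 2 (Base cases).} It then suffices to establish the five congruences $a_1(7n+5)$, $a_3(7n+2)$, $a_4(7n+4)$, $a_5(7n+6)$, $a_7(7n+3) \equiv 0 \pmod 7$. Since $a_1(n) = p(n)$, the first is Ramanujan's classical congruence $p(7n+5)\equiv 0\pmod 7$, which I would simply cite. The case $t=7$ is particularly clean: using $f_1^7\equiv f_7\pmod 7$ gives $\sum a_7(n)q^n \equiv f_2^6/f_7 \pmod 7$, and since $1/f_7$ is a series in $q^7$ the problem reduces to showing that the coefficient of $q^{7n+3}$ in $f_2^6=(f_2^3)^2$ is divisible by $7$. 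By Jacobi's identity $f_2^3 = \sum_{n\ge 0}(-1)^n(2n+1)q^{n(n+1)}$, the residues $n(n+1)\pmod 7$ lie in $\{0,2,5,6\}$; a direct check shows that $m(m+1)+n(n+1)\equiv 3\pmod 7$ forces $m\equiv n\equiv 3\pmod 7$, making each factor $2m+1$ and $2n+1$ divisible by $7$.

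\textbf{Remaining cases and main obstacle.} For $t\in\{3,4,5\}$ the same template applies: multiply top and bottom by $f_1^{7-t}$ and reduce modulo $7$ to obtain
\begin{align*}
\frac{f_2^{t-1}}{f_1^{t}} \;\equiv\; \frac{f_2^{t-1}\,f_1^{7-t}}{f_7}\pmod 7,
\end{align*}
so one must show that the coefficient of $q^{7n+r}$ in the polynomial-type factor $f_2^{t-1}f_1^{7-t}$ is divisible by $7$. The plan is to expand this factor via Jacobi's identity for the cubic powers and Euler's pentagonal number theorem $f_1 = \sum_{k\in\mathbb Z}(-1)^k q^{k(3k-1)/2}$ for the remaining linear factors, and then carry out a residue-counting argument modulo $7$ analogous to the $t=7$ case. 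I expect this is where the main difficulty lies: unlike the $t=7$ case, where a single Jacobi expansion suffices, the mixed product of triangular and pentagonal forms for $t=3,4,5$ produces several intermediate residue classes, and the delicate point is to verify that in every representation of $7n+r$ by the associated quadratic form, one of the accompanying coefficients $(2m+1)$ or pentagonal-sign factor conspires to be divisible by $7$.
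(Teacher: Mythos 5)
The paper never proves this statement: it is quoted from Hirschhorn and Sellers \cite{hs1} purely as motivation, so there is no internal proof to compare you against. That said, your Stage~1 reduction is precisely the mechanism the paper isolates as Lemma~\ref{l2.5} and uses in every proof, and it is correct here: $(f_{14}/f_7)^j$ is a series in $q^7$, so the $j\ge 1$ cases follow from $j=0$. Your $t=1$ case is indeed Ramanujan's $p(7n+5)\equiv 0\pmod 7$, and your $t=7$ computation checks out: the residues of $n(n+1)$ modulo $7$ are $\{0,2,5,6\}$, the only way two of them sum to $3$ is $5+5$, which forces $m\equiv n\equiv 3\pmod 7$ and hence $7\mid (2m+1)(2n+1)$.

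The genuine gap is that the cases $t=3,4,5$ are left as a plan rather than a proof, and the plan as described succeeds for only one of them. For $t=4$ it does work: the factor is $f_1^3f_2^3$, a product of two Jacobi cubes, the exponent condition becomes $(2m+1)^2+2(2n+1)^2\equiv 0\pmod 7$, and since $-2$ is a quadratic non-residue modulo $7$ both odd factors must be divisible by $7$, exactly as in your $t=7$ argument. But for $t=3$ and $t=5$ the factors $f_1^4f_2^2$ and $f_1^2f_2^4$ contain at most one cube; after extracting it, the remaining three factors are single $f_1$'s or $f_2$'s whose pentagonal-number expansions carry coefficients $\pm 1$, which are never divisible by $7$. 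So the only hope for a term-by-term argument is the single factor $(2m+1)$, and that hope fails: for $t=3$, completing the square turns the exponent condition for $7n+2$ into $3x^2+y^2+2z^2+2w^2\equiv 0\pmod 7$ with $x=2m+1$, and this has solutions with $x\not\equiv 0\pmod 7$ (e.g.\ $x=1$, $y=2$, $z=w=0$, all attainable residues). The surviving terms must therefore cancel among themselves, which requires an actual $7$-dissection or a smarter factorization rather than residue counting. As written, the congruences for $t=3$ and $t=5$ (and strictly speaking $t=4$, which you did not carry out) remain unproved.
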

	In \cite{thfa}, the authors employed the theory of modular forms to prove two `isolated' congruences modulo 3 and 5, respectively. 
	\begin{theorem}[{{\cite[Theorem~1.4]{thfa}}}]\label{t1.2}
		For $\alpha\ge0$ and all $n\ge0$,
		\begin{align*}
			a_5\displaystyle\left(3^{2\alpha+3}n+\frac{153\cdot3^{2\alpha}-1)}{8}\right)\equiv0\pmod3.
		\end{align*}
	\end{theorem}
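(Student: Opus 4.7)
The plan is to work modulo $3$ with elementary $q$-series identities, combining the Frobenius endomorphism $f_k^3\equiv f_{3k}\pmod{3}$ with iterated $3$-dissection of the generating function, and then to propagate the base case by induction on $\alpha$.

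First I would pass to the mod-$3$ generating function. Using $f_k^3\equiv f_{3k}\pmod 3$,
\[
\sum_{n\ge0}a_5(n)q^n \;=\; \frac{f_2^4}{f_1^5}
\;=\; \frac{f_1 f_2^4}{f_1^6}
\;\equiv\; \frac{f_1 f_2 f_6}{f_3^2}\pmod{3}.
\]
Since $f_3$ and $f_6$ only involve powers of $q^3$, the entire $3$-dissection structure is carried by the factor $f_1 f_2$. I would apply Ramanujan's $3$-dissection of $f_1$ (and of $f_2$, obtained by $q\mapsto q^2$) to extract $\sum_n a_5(3n+r)q^n\pmod 3$ for each $r\in\{0,1,2\}$, and then iterate the same procedure two more times, since the modulus in the statement grows by a factor of $9=3^2$ per inductive step, consistent with the easily verified recurrence $r_{\alpha+1}=9r_\alpha+1$ for $r_\alpha=(153\cdot 3^{2\alpha}-1)/8$. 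This isolates $\sum_n a_5(27n+19)q^n\pmod 3$, and verifying that the resulting eta-quotient vanishes modulo $3$ settles the base case $\alpha=0$. Useful auxiliary tools here are Jacobi's identity $f_1^3=\sum_{n\ge0}(-1)^n(2n+1)q^{n(n+1)/2}$, whose Fourier support avoids the residue class $2\pmod 3$, and the Borwein form $b(q)=f_1^3/f_3$.

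For the inductive step I would aim for a self-replicating congruence of the shape
\[
\sum_{n\ge0} a_5\bigl(3^{2(\alpha+1)+3}n + r_{\alpha+1}\bigr)\,q^n
\;\equiv\; \Phi\!\Bigl(\sum_{n\ge0} a_5\bigl(3^{2\alpha+3}n + r_\alpha\bigr)\,q^n\Bigr)\pmod 3,
\]
where $\Phi$ is the double $3$-dissection already used in the base case, composed with an explicit eta-quotient encoding the shift $9r_\alpha+1$. Granting such an identity, the base case propagates to every $\alpha\ge 0$ at once. In my view the principal obstacle is producing a clean enough form of the $3$-dissection of $f_1 f_2$ modulo $3$: the decomposition must be explicit enough that $\Phi$ acts as a manifest multiplication by $3$ on the relevant series, and organising the pentagonal exponents by residue class modulo $3$ (and then $9$, $27$) is where the bookkeeping becomes delicate. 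If this cannot be arranged with elementary identities alone, the natural fallback is to imitate \cite{thfa} by realising $q^{-1/8}\eta(2\tau)^4/\eta(\tau)^5$ as a weakly holomorphic modular form on $\Gamma_0(N)$ and invoking Sturm's bound.
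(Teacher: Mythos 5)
Your plan follows the same route the paper itself takes in Section 3: reduce everything modulo $3$ via $f_k^3\equiv f_{3k}$, establish the base case $a_5(27n+19)\equiv 0\pmod 3$ by iterated $3$-dissection, and then propagate along the recurrence $r_{\alpha+1}=9r_\alpha+1$ (which you identify correctly) by a self-replicating congruence. Your base-case reduction is sound: $\frac{f_2^4}{f_1^5}\equiv\frac{f_6^2}{f_3^2}\cdot\frac{f_1}{f_2^2}$, equivalently your $\frac{f_1f_2f_6}{f_3^2}$, and two rounds of dissection — the paper uses the $3$-dissections of $\psi(q)$ and $1/\psi(q)$ rather than of $f_1$ and $f_2$ separately, but either works — lead to $\sum_{n\ge0} a_5(9n+1)q^n\equiv -\frac{f_3^4}{f_6^2}\psi(q)\pmod 3$, whose support avoids the class $2\pmod 3$ exactly by the triangular-number observation you cite.

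The gap is in the inductive step. You posit a self-replicating congruence at level $3^{2\alpha+3}$ governed by an $\alpha$-dependent operator $\Phi$, but you neither identify it nor indicate how it would be proved, and in that form it would be unpleasant to establish. What actually closes the induction is a single, $\alpha$-independent internal congruence,
\[
a_5(27n+10)\equiv a_5(3n+1)\pmod 3,
\]
which falls out of one more extraction: taking the $q^{3n+1}$ part of the $9n+1$ series gives
\[
\sum_{n\ge0} a_5(27n+10)q^n\equiv-\frac{f_1^4}{f_2^2}\,\psi(q^3)=-f_1^3\cdot\frac{f_1}{f_2^2}\cdot\psi(q^3)\equiv-f_3\,\frac{\psi(q^3)}{\psi(q)}\pmod 3,
\]
and this last expression is precisely the series already obtained for $\sum_{n\ge0} a_5(3n+1)q^n$. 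Since $r_\alpha\equiv 1\pmod 3$ for every $\alpha$, substituting $3m+1=3^{2\alpha+3}n+r_\alpha$ into the internal congruence yields $27m+10=3^{2(\alpha+1)+3}n+9r_\alpha+1=3^{2(\alpha+1)+3}n+r_{\alpha+1}$, and the theorem follows by induction. You need to find and prove this specific identity — the key trick being the recognition $f_1^4/f_2^2=f_1^3\cdot f_1/f_2^2\equiv f_3/\psi(q)$ that matches the new series with the old one — rather than merely asserting a congruence of the right "shape"; once you have it, the modular-forms fallback you mention is unnecessary.
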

	
	\begin{theorem}[{{\cite[Theorem~1.5]{thfa}}}]\label{t1.3}For all  $n\ge0$,
		\begin{align*}
			a_5\displaystyle\left(5n+3\right)\equiv0\pmod5.
		\end{align*}
	\end{theorem}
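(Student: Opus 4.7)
The plan is to start from the generating function
\[
\sum_{n=0}^{\infty} a_5(n)\, q^n \;=\; \frac{f_2^4}{f_1^5},
\]
and reduce it modulo $5$ using the elementary congruence $f_k^5 \equiv f_{5k} \pmod 5$ (immediate from the binomial theorem applied to each factor $(1-q^{jk})^5$). Writing $f_2^4 = f_2^5/f_2 \equiv f_{10}/f_2 \pmod 5$ and $f_1^5 \equiv f_5 \pmod 5$, I obtain
\[
\sum_{n=0}^{\infty} a_5(n)\, q^n \;\equiv\; \frac{f_{10}}{f_2\, f_5} \pmod 5.
\]

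The key observation is that both $f_{10}=(q^{10};q^{10})_\infty$ and $f_5=(q^5;q^5)_\infty$ depend only on $q^5$. Hence, writing $f_{10}/f_5 = \sum_{k\ge 0} c_k\, q^{5k}$ for some integers $c_k$, the coefficient of $q^{5n+3}$ on the right-hand side equals
\[
\sum_{k\ge 0} c_k \cdot \bigl[q^{5(n-k)+3}\bigr]\,\frac{1}{f_2}.
\]
So the whole congruence $a_5(5n+3)\equiv 0 \pmod 5$ reduces to the single claim that
\[
\bigl[q^{5j+3}\bigr]\,\frac{1}{f_2} \;\equiv\; 0 \pmod 5 \qquad \text{for every } j\ge 0.
\]

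For this final step, I would dissect by parity of $j$. Since $1/f_2 = \sum_{m\ge 0} p(m)\, q^{2m}$ is supported only on even exponents, when $j$ is even the exponent $5j+3$ is odd and the coefficient is trivially $0$. When $j=2\ell+1$ is odd, one has $5j+3 = 10\ell+8$, so the coefficient equals $p(5\ell+4)$, which vanishes modulo $5$ by Ramanujan's classical congruence $p(5\ell+4)\equiv 0 \pmod 5$. This completes the argument.

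The proof is essentially a single invocation of Ramanujan's congruence, packaged through a $5$-dissection; no real obstacle is expected. The only delicate point is the bookkeeping that shows the $q^5$-nature of $f_{10}/f_5$ reduces the problem to the single generating series $1/f_2$, and that the parity dissection cleanly lands on the arithmetic progression $5\ell+4$ where Ramanujan's identity applies.
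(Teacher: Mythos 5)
Your argument is correct, and it is worth noting that the paper you are reading does not actually prove Theorem \ref{t1.3}: it is quoted from Thejitha and Fathima \cite{thfa}, where it was established via the theory of modular forms, and the surrounding text explicitly records that finding an elementary proof of this congruence was one of the motivations for \cite{sellers}. Your reduction is exactly the kind of elementary argument the authors have in mind elsewhere in the paper. The computation $\frac{f_2^4}{f_1^5}\equiv\frac{f_{10}}{f_2 f_5}\pmod 5$ is the same style of Binomial Theorem manipulation as Lemma \ref{bt}, the observation that $f_{10}/f_5$ is a series in $q^5$ correctly isolates $[q^{5j+3}]\,1/f_2$ as the only thing to control, and the parity dissection $5(2\ell+1)+3=2(5\ell+4)$ lands squarely on Ramanujan's congruence $p(5\ell+4)\equiv 0\pmod 5$. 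What your route buys is a two-line elementary proof at the cost of importing Ramanujan's classical result as a black box, whereas the original proof in \cite{thfa} is self-contained within the modular-forms framework but far less transparent; your argument also makes it plain (via Lemma \ref{l2.5}) that the same congruence propagates to $a_{5(k-j)+1,\,5k+5}(5n+3)\equiv 0\pmod 5$ for all $k\ge j\ge 0$, in the spirit of the families proved in Section \ref{s5}.
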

	With the initial goal to provide elementary proofs of Theorem \ref{t1.2} and \ref{t1.3}, Sellers \cite{sellers} extended Theorem \ref{t1.2} as follows.
	\begin{theorem}[{{\cite[Theorem~4.1]{sellers}}}]\label{t1.4}
		For $0\le t\le 8$ and all $n\ge0$,
		\begin{align*}
			a_{3t+2}(27n+(18+t))\equiv 0 \pmod 3.
		\end{align*}
	\end{theorem}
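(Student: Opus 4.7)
\medskip

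\noindent \textit{Proof plan.} The identity $\sum_{n \ge 0} a_{3t+2}(n)\, q^n = f_2^{3t+1}/f_1^{3t+2}$ is the starting point. My first step is to reduce modulo~$3$: since $(1-q^n)^3 \equiv 1 - q^{3n} \pmod 3$, one has $f_m^3 \equiv f_{3m} \pmod 3$. Splitting exponents as $3t+1 = 3t + 1$ and $3t+2 = 3(t+1) - 1$, and borrowing one factor of $f_1$ from the denominator together with one factor of $f_2$ from the numerator, I would obtain
\[
\sum_{n \ge 0} a_{3t+2}(n)\, q^n \;\equiv\; \frac{f_1 f_2\, f_6^{\,t}}{f_3^{\,t+1}} \pmod 3.
\]
The factor $f_6^{\,t}/f_3^{\,t+1}$ is a power series in $q^3$, so the non-trivial $q$-structure modulo~$3$ lives entirely in $f_1 f_2$.

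Next I would invoke the 3-dissection
\[
f_1 f_2 \;=\; A_0(q^3) + q\, A_1(q^3) + q^2\, A_2(q^3),
\]
with $A_0, A_1, A_2$ explicit eta-quotients obtained from the Jacobi triple product by collecting terms according to the residue of the exponent modulo~$3$. Writing $t = 3s + r$ with $r, s \in \{0,1,2\}$, one has $27n + 18 + t \equiv r \pmod 3$, so only the summand $q^r A_r(q^3)$ contributes to the coefficient of $q^{27n+18+t}$. Extracting this coefficient and substituting $q^3 \mapsto q$ reduces the claim to showing
\[
[q^{9n+6+s}]\, A_r(q) \cdot \frac{f_2^{\,t}}{f_1^{\,t+1}} \;\equiv\; 0 \pmod 3
\]
for each of the nine pairs $(r,s)$.

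To finish, I would apply $f_m^3 \equiv f_{3m} \pmod 3$ a second time, now to $f_2^{\,t}/f_1^{\,t+1}$, pushing most of the series into powers of $q^3$ and leaving only a small residual factor built from $f_1$ and $f_2$. A further $3$-dissection of $A_r(q)$, combined with the observation $9n + 6 + s \equiv s \pmod 3$ so that again only one dissection component contributes, collapses the problem to the vanishing modulo~$3$ of a single coefficient of an eta-quotient at $q^{3n+2}$. The factor of $3$ we need should emerge at this stage from a classical identity in which an explicit coefficient $3$ appears, for example the $3$-dissection of $1/f_1$ or a Ramanujan theta-function identity.

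The main obstacle is arranging the two $3$-dissections so that all nine pairs $(r,s)$ are treated uniformly rather than by brute-force enumeration, and verifying that the components $A_r(q)$ admit secondary $3$-dissections compatible with the first, so that at the final stage a factor of $3$ is visibly exposed. I expect this bookkeeping, rather than any single deep identity, to be the main technical difficulty.
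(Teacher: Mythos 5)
Your reduction is set up correctly: from $\sum_{n\ge0}a_{3t+2}(n)q^n=f_2^{3t+1}/f_1^{3t+2}$ you legitimately obtain $f_1f_2\cdot f_6^{t}/f_3^{t+1}$ modulo $3$, the $3$-dissection of $f_1f_2$ you invoke is exactly identity \eqref{e4.5} of this paper, and the bookkeeping $t=3s+r$, $27n+18+t\mapsto 9n+6+s$ is right. But the proposal stops precisely where the proof actually begins. After the first extraction you are left with nine claims of the form $[q^{9n+6+s}]\,A_r(q)\,f_2^{t}/f_1^{t+1}\equiv 0\pmod 3$, and the residual quotients $f_2^{r}/f_1^{r+1}$ (for $r=1,2$ these are essentially $1/\phi(-q)$ and $f_2^2/f_1^3$) require further dissection identities that you neither state nor verify. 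Nothing in the plan establishes that the final coefficients vanish; the assertion that ``the factor of $3$ we need should emerge\dots from a classical identity in which an explicit coefficient $3$ appears'' is a hope, not an argument. It is also not how the one case this paper does prove actually works: for $t=1$ (congruence \eqref{e1.6}) the vanishing comes from the fact that the final series $-\frac{f_3^4}{f_6^2}\bigl(P(q^3)+q\,\psi(q^9)\bigr)$ is supported on only two residue classes modulo $3$ — a \emph{missing} dissection component, not a visible coefficient divisible by $3$. Both mechanisms occur in this subject (compare the proof of Theorem \ref{t1.10}, where a coefficient $3$ does the work), so you cannot assume in advance which one you will meet, and you must actually exhibit it in each of the nine cases.

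For calibration against the paper: this theorem is quoted from \cite{sellers} and is not proved in full here. What the paper does prove is the single instance $t=1$, by a different decomposition — writing $f_2^4/f_1^5\equiv\frac{f_6^2}{f_3^2}\cdot\frac{1}{\psi(q)}$ and applying the $3$-dissections \eqref{el2.9} and \eqref{el2.10} of $\psi(q)$ and $1/\psi(q)$ twice in succession. Your route through the $3$-dissection of $f_1f_2$ is a genuinely different (and potentially more uniform) organization, but as written it is an outline with the decisive computations — the secondary dissections and the verification of vanishing for each pair $(r,s)$ — entirely absent. To turn it into a proof you would need to carry out those nine extractions explicitly, supplying the additional dissection identities (e.g.\ for $1/\phi(-q)$) that the present paper does not record.
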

	\noindent The above theorem implies that, for all $n\geq 0$, 
	\begin{align}\label{e1.6}
		a_5(27n+19)\equiv 0 \pmod 3.
	\end{align}
	Sellers \cite{sellers} established the following internal congruence modulo 3 for $a_5(n)$. For all $n\ge0$, we have
	\begin{align}\label{e1.5}
		a_{5}(27n+10)\equiv a_{5}(3n+1)\pmod 3.
	\end{align}
	
	Very recently, Amdeberhan, Sellers, and Singh \cite{ajsi} introduced partitions with multicolored even parts and monochromatic odd parts. We denote the number of such generalized colored partitions by $a_r^*(n)$. The generating function for $a_r^*(n)$ is given by
	\begin{align*}
		\sum_{n=0}^{\infty}a_r^*(n)q^n=\dfrac{1}{f_1f_2^{r-1}}\; .
	\end{align*}
	They established several congruences modulo primes satisfied by $a_r^*(n)$ as follows:
	\begin{theorem}[{{\cite[Theorem~1.2]{ajsi}}}]
		Let $p$ be an odd prime. Then, for all $n\ge 0$,
		\begin{align*}
			a_{p-1}^*(pn+r)\equiv 0 \pmod p
		\end{align*}
		where $r$ is an integer, $1\le r\le p-1$, such that $8r+1$ is a quadratic nonresidue modulo $p$.
	\end{theorem}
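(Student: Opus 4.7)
The plan is to reduce the generating function modulo $p$ using the standard fact that $f_m^p \equiv f_{mp} \pmod p$ (a consequence of the binomial theorem in characteristic $p$), and then apply Gauss's triangular number identity to extract an arithmetic condition on $r$.

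First I would rewrite $f_2^{p-2} = f_2^{p-1}/f_2 = f_2^p/f_2^2 \equiv f_{2p}/f_2^2 \pmod p$, so that
\begin{align*}
\sum_{n\ge 0} a_{p-1}^*(n) q^n \;=\; \frac{1}{f_1 f_2^{p-2}} \;\equiv\; \frac{f_2^2}{f_1\, f_{2p}} \pmod p.
\end{align*}
Next I would invoke Gauss's identity
\begin{align*}
\frac{f_2^2}{f_1} \;=\; \sum_{k=0}^{\infty} q^{k(k+1)/2},
\end{align*}
so that
\begin{align*}
\sum_{n\ge 0} a_{p-1}^*(n)\, q^n \;\equiv\; \frac{1}{f_{2p}} \sum_{k=0}^{\infty} q^{k(k+1)/2} \pmod p.
\end{align*}

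The key observation is that $1/f_{2p}$ is a power series in $q^{2p}$, hence all powers of $q$ appearing in it are multiples of $p$. Therefore, for the coefficient of $q^{pn+r}$ on the right-hand side to be nonzero modulo $p$, there must exist a nonnegative integer $k$ such that $k(k+1)/2 \equiv r \pmod p$. Multiplying by $8$ and completing the square, this congruence is equivalent to
\begin{align*}
(2k+1)^2 \equiv 8r+1 \pmod p.
\end{align*}
Under the hypothesis that $8r+1$ is a quadratic nonresidue modulo $p$, no such $k$ exists, so every coefficient of $q^{pn+r}$ in the series on the right-hand side vanishes modulo $p$, yielding $a_{p-1}^*(pn+r) \equiv 0 \pmod p$ as required.

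I do not expect a genuine obstacle here: the argument is a textbook application of the $f_m^p \equiv f_{mp}$ trick paired with Gauss's identity, and the quadratic residue condition on $8r+1$ falls out naturally from the discriminant of $k^2+k-2r$. The only mild care needed is to verify that $1\le r\le p-1$ forces $r$ to remain in the correct residue class (so that the ``nonresidue'' condition is unambiguous) and to note that $k$ may range over all nonnegative integers so it is genuinely sufficient to rule out the congruence $(2k+1)^2\equiv 8r+1\pmod p$ for every $k$.
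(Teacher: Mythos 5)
Your proposal is correct. Note that the paper only quotes this result from Amdeberhan--Sellers--Singh without reproving it, but your argument is exactly the standard technique used throughout Section 5 of the paper --- compare the proof of Theorem \ref{t1.19}, where the same reduction to $\frac{1}{f_p}\psi(q)$ via $f_m^p\equiv f_{mp}\pmod p$ and the same completion of the square $(2k+1)^2\equiv 8r+1\pmod p$ appear; the only cosmetic difference is that you apply the binomial reduction to $f_2^{p-2}$ (yielding a factor $1/f_{2p}$) rather than to a power of $f_1$.
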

	\noindent Subsequently, Das, Maity, and Saikia \cite{hdas}, Dockery \cite{doc}, and Guadalupe \cite{rusg, rusg2, rus3} obtained several infinite families of congruences for $a_r^*(n)$.\\
	\indent Our aim in this paper is to significantly extend the definition of color partitions. In order to describe our approach we first consider a natural combinatorial generalization, namely, partitions with $r$-colored even parts and $s$-colored odd parts. We denote the number of such generalized colored partitions by $a_{r,s}(n)$ and define $a_{r,s}(0):=1$. The generating function for $a_{r,s}(n)$ is given by
	\begin{align}\label{e1.4}
		\sum_{n=0}^{\infty}a_{r,s}(n)q^n=\dfrac{f_2^{s-r}}{f_1^s}\; .
	\end{align}
	This provides a natural generalization to several of the generating functions above for different color partitions.
	
	This article contributes to the study of the family of functions \eqref{e1.4} in detail by establishing the following results. 
	\begin{theorem}\label{t1.6}
		For all $n\ge0$, and all $k\ge j\geq 0$,
		\begin{align*}
			a_{3(k-j)+3,\; 3k+6}(3n+1)\equiv a_{3(k-j)+3,\; 3k+6}(3n+2) \equiv 0 \pmod3. 
		\end{align*}
	\end{theorem}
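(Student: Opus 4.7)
The plan is to substitute the specific parameters $r=3(k-j)+3$ and $s=3k+6$ into the generating function identity \eqref{e1.4}, exploit the fact that the resulting exponents of $f_2$ and $f_1$ are both multiples of $3$, and then reduce everything modulo $3$ using the standard Frobenius-type congruence $f_m^3 \equiv f_{3m} \pmod 3$. This congruence is an easy consequence of the binomial theorem applied termwise to $(1-q^{mn})^3 \pmod 3$, since the middle binomial coefficients $\binom{3}{1}$ and $\binom{3}{2}$ both vanish modulo $3$.

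First I would compute $s-r = (3k+6) - (3(k-j)+3) = 3j+3$ and $s = 3k+6$, both of which are divisible by $3$. Therefore
\begin{align*}
\sum_{n\geq 0} a_{3(k-j)+3,\;3k+6}(n)\, q^n \;=\; \frac{f_2^{3(j+1)}}{f_1^{3(k+2)}} \;=\; \frac{(f_2^3)^{j+1}}{(f_1^3)^{k+2}}.
\end{align*}
Applying the congruence $f_m^3 \equiv f_{3m} \pmod 3$ with $m=1$ and $m=2$ then yields
\begin{align*}
\sum_{n\geq 0} a_{3(k-j)+3,\;3k+6}(n)\, q^n \;\equiv\; \frac{f_6^{j+1}}{f_3^{k+2}} \pmod 3.
\end{align*}

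The key observation is that the right-hand side is a power series in $q^3$: indeed, $f_3 = (q^3;q^3)_\infty$ and $f_6 = (q^6;q^6)_\infty$ are each polynomial expressions in $q^3$, and the reciprocal $1/f_3^{k+2}$ is therefore also a power series in $q^3$. Comparing coefficients of $q^n$ on both sides, we conclude that $a_{3(k-j)+3,\;3k+6}(n) \equiv 0 \pmod 3$ for every $n$ that is not divisible by $3$. Specializing to $n = 3N+1$ and $n = 3N+2$ gives the two asserted congruences simultaneously.

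There is no substantive obstacle in this proof — the argument is entirely elementary once one notices that both exponents happen to be multiples of $3$, so the main verification is simply checking that $s$ and $s-r$ are divisible by $3$ for the chosen parameter family. The result reveals that the family in Theorem \ref{t1.6} has been engineered precisely so that a single application of the cube-congruence $f_m^3\equiv f_{3m}\pmod 3$ trivializes the generating function modulo $3$ into a series supported on exponents $\equiv 0 \pmod 3$.
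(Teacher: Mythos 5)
Your proof is correct and rests on exactly the same mechanism as the paper's: the congruence $f_m^3\equiv f_{3m}\pmod 3$ turns the generating function into a series in $q^3$. The only difference is organizational — the paper first handles the base case $a_{3,6}(n)$ via $f_2^3/f_1^6\equiv f_6/f_3^2$ and then lifts to general $j,k$ with its Lemma \ref{l2.5}, whereas you fold both steps into one computation since every exponent in the general case is already a multiple of $3$; your version is, if anything, slightly more direct.
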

	

	\begin{theorem}\label{t1.8}
		For all $n\ge0$, and all $k\ge j\geq 0$,
		\begin{align*}
			a_{27(k-j)+2,\; 27k+4}(27n+9)\equiv a_{27(k-j)+2,\; 27k+4}(27n+18)\equiv 0 \pmod 3.	
		\end{align*}
	\end{theorem}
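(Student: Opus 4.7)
The plan is to first reduce the general $(k,j)$ case to the base case $k=j=0$, and then to establish the base case by means of $3$-dissections of $\psi$ and $\phi$.

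For the reduction, I would use the elementary congruence $f_m^3 \equiv f_{3m}\pmod 3$, which allows the factorisation
\[
\sum_{n\ge 0}a_{27(k-j)+2,\,27k+4}(n)\,q^n \;=\; \frac{f_2^{27j+2}}{f_1^{27k+4}} \;\equiv\; \frac{f_{54}^j}{f_{27}^k}\cdot\frac{f_2^2}{f_1^4}\pmod 3.
\]
Since the prefactor $f_{54}^j/f_{27}^k$ is a power series in $q^{27}$, convolving by it preserves the divisibility by $3$ of every coefficient in a given residue class modulo $27$. The theorem therefore reduces to the base case
\[
a_{2,4}(27n+9) \equiv a_{2,4}(27n+18) \equiv 0\pmod 3.
\]

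For the base case, I would first rewrite $f_2^2/f_1^4 \equiv f_2^2/(f_1 f_3) = \psi(q)/f_3 \pmod 3$, where $\psi(q)=f_2^2/f_1=\sum_{n\ge 0}q^{n(n+1)/2}$ is Ramanujan's theta function. Splitting triangular numbers by $n\bmod 3$ yields the well known dissection $\psi(q) = f(q^3,q^6) + q\,\psi(q^9)$, where $f(a,b) := \sum_{n\in\mathbb{Z}}a^{n(n+1)/2}b^{n(n-1)/2}$. Because $f_3$ is a series in $q^3$, the exponents $\equiv 0\pmod 3$ in $\psi(q)/f_3$ come only from $f(q^3,q^6)/f_3$, the other contribution being supported on exponents $\equiv 1\pmod 3$. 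Since both $27n+9$ and $27n+18$ are divisible by $3$, substituting $Q=q^3$ reduces the claim to showing that the coefficients of $Q^{9n+3}$ and $Q^{9n+6}$ in
\[
G(Q) := \frac{f(Q,Q^2)}{f_1} = \frac{f_2 f_3^2}{f_1^2 f_6}
\]
(with $f_m=(Q^m;Q^m)_\infty$ from here on) are divisible by $3$.

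For the main step, $f_1^{-2}\equiv f_1/f_3\pmod 3$ yields $G(Q)\equiv f_1 f_2 f_3/f_6 \pmod 3$. I would then combine the product identity $f_1 f_2 = \phi(-Q)\psi(Q)$ with the $3$-dissections
\[
\phi(-Q) = \phi(-Q^9) - 2Q\,f(-Q^3,-Q^{15}),\qquad \psi(Q) = f(Q^3,Q^6)+Q\,\psi(Q^9),
\]
the former obtained by splitting $\sum_{n\in\mathbb{Z}}(-1)^n Q^{n^2}$ according to $n\bmod 3$. Expanding the product $\phi(-Q)\psi(Q)$, only the term $\phi(-Q^9)\,f(Q^3,Q^6)$ contributes to exponents $\equiv 0\pmod 3$; the remaining three terms lie in residue classes $1$ and $2$. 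Multiplying by $f_3/f_6$ and invoking the Jacobi triple product identity $f(Q^3,Q^6)\cdot f_3/f_6 = f_9^2/f_{18}$, the $Q^{0\bmod 3}$ stratum of $G(Q)$ modulo $3$ equals $\phi(-Q^9)\,f_9^2/f_{18}$, a power series in $Q^9$. Hence all coefficients of $Q^m$ with $m\equiv 3,6\pmod 9$ vanish identically, settling the base case. The main obstacle is locating the right $3$-dissections so that this cancellation becomes transparent; once one recognises the product identity $f_1 f_2 = \phi(-Q)\psi(Q)$ and the Jacobi reduction $f(Q^3,Q^6)\cdot f_3/f_6 = f_9^2/f_{18}$, the remainder is routine eta-quotient manipulation modulo $3$.
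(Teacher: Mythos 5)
Your argument is correct, and its skeleton matches the paper's: both reduce to the base case $(j,k)=(0,0)$ via $f_m^3\equiv f_{3m}\pmod 3$ (the paper's Lemma \ref{l2.5}), both write $f_2^2/f_1^4\equiv\psi(q)/f_3$, apply $\psi(q)=f(q^3,q^6)+q\psi(q^9)$, and arrive at $\sum a_{2,4}(3n)q^n\equiv f_2f_3^2/(f_1^2f_6)$. You diverge only at the second dissection. The paper recognizes $f_2/f_1^2$ as the overpartition generating function and quotes the Hirschhorn--Sellers computation from their 2005 overpartition paper to conclude $\sum a_{2,4}(9n)q^n\equiv f_3^4/f_6^2$, a series in $q^3$; you instead reduce $f_2f_3^2/(f_1^2f_6)$ to $f_1f_2f_3/f_6$, factor $f_1f_2=\phi(-Q)\psi(Q)$, and dissect each theta function by residue of the exponent modulo $3$, observing that only $\phi(-Q^9)f(Q^3,Q^6)$ lands in the $0\bmod 3$ stratum and that $f(Q^3,Q^6)\,f_3/f_6=f_9^2/f_{18}$ makes that stratum a series in $Q^9$. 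The two computations agree (your $\phi(-q^3)f_3^2/f_6$ is exactly $f_3^4/f_6^2$), so your route buys self-containedness -- no appeal to the overpartition literature and no need for the catalogued $3$-dissection of $f_1f_2$ in \eqref{e4.5} -- at the cost of verifying two elementary theta dissections by hand; the paper's route is shorter on the page but leans on an external result. All the identities you invoke check out, including the parity-class bookkeeping (both $\phi(-Q)$ and $\psi(Q)$ are supported on exponents $\equiv 0,1\pmod 3$, so only the $(0,0)$ product contributes to the $0\bmod 3$ class).
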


	\begin{theorem}\label{t1.9}
		For all $n\ge 0$,
		\begin{align*}
			a_{2,4}(27n)\equiv	a_{2,4}(3n) \pmod 3.
		\end{align*} 
	\end{theorem}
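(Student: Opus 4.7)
The plan is to reduce $\sum a_{2,4}(n)\,q^n$ modulo $3$ to the square of a classical theta function; once this is achieved, the desired internal congruence falls out of an elementary two-squares observation. Starting from \eqref{e1.4} with $r=2$, $s=4$ and applying $f_1^3 \equiv f_3 \pmod 3$, I would rewrite
\begin{align*}
\sum_{n\ge0} a_{2,4}(n)\, q^n \;=\; \frac{f_2^2}{f_1^4} \;\equiv\; \frac{f_2^2}{f_1 f_3} \;=\; \frac{\psi(q)}{f_3} \pmod 3,
\end{align*}
where $\psi(q) = f_2^2/f_1$. Invoking the classical $3$-dissection
\begin{align*}
\psi(q) \;=\; \frac{f_6 f_9^2}{f_3 f_{18}} + q\,\frac{f_{18}^2}{f_9},
\end{align*}
and using that $f_3$ depends only on $q^3$, I would extract the $q^{3n}$-part and, after $q^3 \mapsto q$, conclude
\begin{align*}
\sum_{n\ge0} a_{2,4}(3n)\, q^n \;\equiv\; \frac{f_2 f_3^2}{f_1^2 f_6} \pmod 3.
\end{align*}

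The crux of the argument is the next step. Using $f_1^3 \equiv f_3$ (so that $f_1^{-2} \equiv f_1/f_3 \pmod 3$) together with $f_2^3 \equiv f_6 \pmod 3$, the eta-quotient on the right-hand side telescopes:
\begin{align*}
\frac{f_2 f_3^2}{f_1^2 f_6} \;\equiv\; \frac{f_1 f_2 f_3}{f_6} \;\equiv\; \frac{f_1 f_2 \cdot f_1^3}{f_2^3} \;=\; \frac{f_1^4}{f_2^2} \;=\; \varphi(-q)^2 \pmod 3,
\end{align*}
where $\varphi(-q) = \sum_{n\in\mathbb{Z}}(-1)^n q^{n^2}$. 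Therefore $\sum_{n\ge0} a_{2,4}(3n)\, q^n \equiv \varphi(-q)^2 \pmod 3$.

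The proof is then completed by an elementary observation. Writing
\begin{align*}
\varphi(-q)^2 \;=\; \sum_{m,n\in\mathbb{Z}} (-1)^{m+n}\, q^{m^2+n^2},
\end{align*}
and noting that $m^2+n^2 \equiv 0 \pmod 3$ forces $3 \mid m$ and $3 \mid n$, the substitution $m=3m'$, $n=3n'$ (for which $(-1)^{3m'+3n'} = (-1)^{m'+n'}$) yields $[q^{9j}]\varphi(-q)^2 = [q^j]\varphi(-q)^2$ for every $j \ge 0$. Combined with the congruence displayed above, this immediately gives $a_{2,4}(27j) \equiv a_{2,4}(3j) \pmod 3$.

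The main obstacle will be spotting the telescoping collapse to $\varphi(-q)^2$. Without this simplification, one would instead be forced to apply two further $3$-dissections — first isolating the series for $a_{2,4}(9n)$ and then for $a_{2,4}(27n)$ — and finally verify an eta-quotient identity, which is considerably more laborious and reminiscent of the approach used by Sellers in the proof of \eqref{e1.5}.
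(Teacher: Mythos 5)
Your proposal is correct, and its first half coincides with the paper's: the reduction of $\sum a_{2,4}(n)q^n$ to $\psi(q)/f_3$ modulo $3$, the dissection \eqref{el2.9}, and the resulting congruence $\sum a_{2,4}(3n)q^n \equiv f_2f_3^2/(f_1^2f_6) \equiv f_1^4/f_2^2 \pmod 3$ are exactly \eqref{e4.1} (established in the proof of Theorem \ref{t1.8}) followed by the first display in the paper's proof of Theorem \ref{t1.9}, which performs the same telescoping collapse you describe. Where you genuinely diverge is the endgame. The paper reaches $\sum a_{2,4}(27n)q^n \equiv f_1^4/f_2^2$ by first passing through $\sum a_{2,4}(9n)q^n \equiv f_3^4/f_6^2$ (equation \eqref{e4.6a}), a step that leans on the Hirschhorn--Sellers treatment of the overpartition generating function $f_2/f_1^2$, and then extracting the $q^{3n}$ terms, which is immediate since $f_3^4/f_6^2$ is a series in $q^3$. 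You instead prove the exact (not merely mod~$3$) identity $[q^{9j}]\,\phi(-q)^2=[q^j]\,\phi(-q)^2$ via the two-squares observation that $m^2+n^2\equiv 0\pmod 3$ forces $3\mid m$ and $3\mid n$, with signs preserved under $m=3m'$, $n=3n'$; combined with $\sum a_{2,4}(3n)q^n\equiv\phi(-q)^2$, this gives the theorem at once. Your route is more self-contained (no appeal to the overpartition literature) and delivers slightly more, namely that the $9$-periodicity of the support of $\phi(-q)^2$ holds on the nose; the paper's route has the side benefit of producing \eqref{e4.6a}, which it needs anyway to prove Theorem \ref{t1.8}. Your closing remark slightly overstates the labor of the alternative --- only one nontrivial further dissection is required, the passage from $9n$ to $27n$ being trivial --- but every step of your argument checks out.
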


	\begin{theorem}\label{t1.10}
		For all $n\ge0$, and all $k\ge j\geq 0$,
		\begin{align*}
			a_{9(k-j)+4,\;9k+2}(9n+7)\equiv 0 \pmod 3.
		\end{align*}
	\end{theorem}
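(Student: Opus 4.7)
The plan is to reduce Theorem~\ref{t1.10} to its specialization at $k=j=0$, namely
\begin{align*}
a_{4,2}(9n+7) \equiv 0 \pmod 3 \quad \text{for all } n \geq 0,
\end{align*}
and then to prove this special case by $3$-dissecting the generating function for $a_{4,2}(n)$ modulo $3$ and analyzing the piece supported on exponents $\equiv 1 \pmod 3$.

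For the reduction, I would use the fact that, with $r = 9(k-j)+4$ and $s = 9k+2$, one has $s-r = 9j-2$, so
\begin{align*}
\sum_{n\ge 0} a_{9(k-j)+4,\,9k+2}(n)\,q^n \;=\; \frac{f_2^{9j-2}}{f_1^{9k+2}} \;=\; \frac{1}{f_1^2 f_2^2}\cdot\frac{f_2^{9j}}{f_1^{9k}}.
\end{align*}
The elementary congruence $f_m^3 \equiv f_{3m} \pmod 3$ gives $f_2^{9j}/f_1^{9k} \equiv f_{18}^j/f_9^k \pmod 3$, a power series in $q^9$. Writing $f_{18}^j/f_9^k \equiv \sum_{m\ge 0} c_m q^{9m} \pmod 3$ and extracting the coefficient of $q^{9n+7}$ on both sides yields
\begin{align*}
a_{9(k-j)+4,\,9k+2}(9n+7) \;\equiv\; \sum_{m=0}^{n} c_m\,a_{4,2}\bigl(9(n-m)+7\bigr) \pmod 3,
\end{align*}
so Theorem~\ref{t1.10} will follow once the $k=j=0$ case is established.

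For the base case, I would begin with $\sum_n a_{4,2}(n)\,q^n = 1/(f_1^2 f_2^2) \equiv f_1 f_2/(f_3 f_6) \pmod 3$. Writing $f_1 f_2 = \varphi(-q)\,\psi(q)$, with $\varphi(-q)=f_1^2/f_2$ and $\psi(q)=f_2^2/f_1$ the standard Ramanujan theta functions, I would invoke the classical $3$-dissections
\begin{align*}
\varphi(-q) = \varphi(-q^9) - 2q\,f(-q^3,-q^{15}), \qquad \psi(q) = f(q^3,q^6) + q\,\psi(q^9).
\end{align*}
Multiplying these out, the piece of $f_1 f_2$ with exponents $\equiv 1 \pmod 3$ is $q\bigl[\varphi(-q^9)\psi(q^9) - 2f(-q^3,-q^{15})f(q^3,q^6)\bigr]$. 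Since $\varphi(-q^9)\psi(q^9)=f_9 f_{18}$ and (via the Jacobi triple product) $f(-q^3,-q^{15})f(q^3,q^6)=f_9 f_{18}$ as well, this piece collapses to $-q\,f_9 f_{18}$. Dividing by $f_3 f_6$, substituting $q^3 \mapsto q$, and using $f_1^3 f_2^3 \equiv f_3 f_6 \pmod 3$ then gives
\begin{align*}
\sum_{n\ge 0} a_{4,2}(3n+1)\,q^n \;\equiv\; -\frac{f_3 f_6}{f_1 f_2} \;\equiv\; -f_1^2 f_2^2 \pmod 3.
\end{align*}

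To finish, I would rewrite $f_1^2 f_2^2 = \varphi(-q)\cdot f_2^3 \equiv \varphi(-q)\cdot f_6 \pmod 3$. The exponents in $\varphi(-q)\,f_6 = \sum_k (-1)^k q^{k^2}\cdot f_6$ are all of the form $k^2+6\ell$ with $k\in\mathbb{Z}$ and $\ell\ge 0$; since $k^2 \bmod 3 \in \{0,1\}$, none is $\equiv 2 \pmod 3$. Hence $[q^{3n+2}]\,f_1^2 f_2^2 \equiv 0 \pmod 3$, which yields $a_{4,2}(9n+7) \equiv 0 \pmod 3$. The main obstacle will be verifying the theta-function identity $f(-q,-q^5)\,f(q,q^2) = f_3 f_6$ (which, after $q \mapsto q^3$, produces the identity used above); this is handled by expressing each factor as an infinite product via Jacobi's triple product and simplifying the resulting eta-quotient.
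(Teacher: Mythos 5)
Your proposal is correct, and it reaches the paper's own key intermediate identity $\sum_{n\ge 0} a_{4,2}(3n+1)q^n \equiv -f_3f_6/(f_1f_2) \pmod 3$ by an equivalent computation, but both dissection steps are carried out differently from the paper. For the first step, the paper simply quotes the catalogued 3-dissection $f_1f_2=\frac{f_6f_9^4}{f_3f_{18}^2}-q\,f_9f_{18}-2q^2\,\frac{f_3f_{18}^4}{f_6f_9^2}$ (its equation \eqref{e4.5}, due to Hirschhorn and Sellers) and reads off the middle term; you instead derive that middle term $-q\,f_9f_{18}$ from scratch by multiplying the 3-dissections of $\varphi(-q)$ and $\psi(q)$ and verifying $f(-q,-q^5)f(q,q^2)=f_3f_6$ via the Jacobi triple product --- a correct and self-contained detour (your product evaluations $f(q,q^2)=\frac{f_2f_3^2}{f_1f_6}$ and $f(-q,-q^5)=\frac{f_1f_6^2}{f_2f_3}$ check out). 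The more substantive divergence is in the second step: the paper applies the Chern--Hao 3-dissection of $1/(f_1f_2)$ (its \eqref{e4.4}) to $-f_3f_6/(f_1f_2)$ and observes that the coefficient of $q^{3n+2}$ carries an explicit factor of $3$, whereas you first reduce $-f_3f_6/(f_1f_2)\equiv -f_1^2f_2^2\equiv -\varphi(-q)f_6\pmod 3$ and then argue that the exponents $k^2+6\ell$ are never $\equiv 2\pmod 3$. Your route is arguably more elementary, trading two nontrivial quoted dissections for a support argument on theta series; the paper's route is shorter on the page because the dissections are already in its toolkit and are reused in the proofs of Theorems \ref{t1.11} and \ref{t1.12}. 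Your reduction of the general $k\ge j\ge 0$ case to $k=j=0$ via $f_2^{9j}/f_1^{9k}\equiv f_{18}^j/f_9^k\pmod 3$ being a series in $q^9$ is exactly the content of the paper's Lemma \ref{l2.5} with $(p,\lambda)=(3,2)$ and $C=7$.
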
	

	\begin{theorem}\label{t1.11}
		For all $n\ge0$, and all $k\ge j\geq 0$,
		\begin{align*}
			a_{3(k-j)+5,\;3k+1}(3n+2)\equiv 0 \pmod 3.
		\end{align*}
	\end{theorem}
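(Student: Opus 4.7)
The plan is to reduce, via mod-$3$ simplifications of the generating function, to the single assertion that $[q^{3n+2}]\frac{1}{f_1 f_2}\equiv 0 \pmod{3}$, and then to verify this using Jacobi's theta function $\varphi(-q)=\sum_{n\in\mathbb{Z}}(-1)^n q^{n^2}$, whose support avoids the residue class $2 \pmod{3}$.

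First, specialising \eqref{e1.4} to $r=3(k-j)+5$ and $s=3k+1$ yields the generating function $f_2^{3j-4}/f_1^{3k+1}$. Applying the basic congruence $f_m^3\equiv f_{3m}\pmod{3}$, together with its consequence $f_2^4=f_2\cdot f_2^3\equiv f_2 f_6\pmod{3}$, one rewrites
\begin{align*}
\frac{f_2^{3j-4}}{f_1^{3k+1}}\equiv \frac{f_6^{j-1}}{f_3^k}\cdot\frac{1}{f_1 f_2}\pmod{3}.
\end{align*}
The prefactor $f_6^{j-1}/f_3^k$ is a power series in $q^3$ for every $k\geq j\geq 0$ (in the edge case $j=0$ it should be read as $1/(f_6 f_3^k)$). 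Hence $a_{3(k-j)+5,\,3k+1}(3n+2)$ is a $\mathbb{Z}$-linear combination of coefficients of the form $[q^{3m+2}]\frac{1}{f_1 f_2}$, and the theorem reduces to showing that these coefficients are all divisible by $3$.

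For that central step, the key manipulation is
\begin{align*}
\frac{1}{f_1 f_2}=\frac{f_1^2 f_2^2}{f_1^3 f_2^3}\equiv \frac{f_1^2 f_2^2}{f_3 f_6}\pmod{3},
\end{align*}
combined with the classical identity $\varphi(-q)=f_1^2/f_2$, which follows at once from the Jacobi triple product. Substituting $f_1^2=\varphi(-q)f_2$ and using $f_2^3\equiv f_6\pmod{3}$ then produces
\begin{align*}
\frac{1}{f_1 f_2}\equiv \frac{\varphi(-q)\,f_6}{f_3 f_6}=\frac{\varphi(-q)}{f_3}\pmod{3}.
\end{align*}

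To conclude, observe that $\varphi(-q)=\sum_{n\in\mathbb{Z}}(-1)^n q^{n^2}$ is supported only on exponents $n^2$, and since the quadratic residues modulo $3$ are $\{0,1\}$, no such exponent is congruent to $2 \pmod{3}$. Because $1/f_3$ is a power series in $q^3$, the product $\varphi(-q)/f_3$ also contains no $q^{3m+2}$ term, which is precisely what was needed. I do not anticipate a genuine obstacle here; the only care required is in the bookkeeping of the initial mod-$3$ reduction, ensuring that all formal exponents (including $f_6^{j-1}$ when $j=0$) are interpreted consistently as power series in $q^3$.
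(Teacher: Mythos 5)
Your proof is correct, and its decisive step takes a genuinely different route from the paper's. The paper first reduces $\sum a_{5,1}(n)q^n = 1/(f_1f_2^4) \equiv (1/f_6)\cdot 1/(f_1f_2) \pmod 3$ and then invokes the full $3$-dissection \eqref{e4.4} of $1/(f_1f_2)$ due to Chern and Hao, observing that the $q^{3n+2}$ component carries an explicit factor of $3$; the extension to all $k\ge j\ge 0$ is then handled by Lemma \ref{l2.5}. You avoid the dissection entirely: the reduction $1/(f_1f_2)\equiv f_1^2f_2^2/(f_3f_6)\equiv \phi(-q)/f_3 \pmod 3$ (with $\phi(-q)=f_1^2/f_2$ as in \eqref{et3}), together with the fact that no square is congruent to $2$ modulo $3$, gives the vanishing of the $q^{3n+2}$ coefficients modulo $3$ directly. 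This is more elementary and self-contained --- it needs only the theta representation of $\phi(-q)$ rather than a quoted $3$-dissection --- and it is in the same spirit as the paper's own proofs of Theorems \ref{t1.16}--\ref{t1.19}, which argue from the support of a theta series. What it gives up is the extra information in \eqref{e4.4} about the $3n$ and $3n+1$ progressions, which the paper reuses in proving Theorems \ref{t1.10} and \ref{t1.12}. Your handling of general $k\ge j$ by peeling off the $q^3$-series prefactor $f_6^{j-1}/f_3^{k}$ is exactly the mechanism of Lemma \ref{l2.5}, so that part coincides with the paper; your attention to the $j=0$ edge case is appropriate and harmless, since $1/f_6$ is still a power series in $q^3$.
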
	
	\begin{theorem}\label{t1.12}
		For all $n\ge0$, and all $k\ge j\geq 0$,
		\begin{align*}
			a_{3(k-j)+5,\;3k+1}(9n+6)\equiv 0 \pmod 3.
		\end{align*}
	\end{theorem}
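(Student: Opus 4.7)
The plan is to iterate the $3$-dissection used for Theorem~\ref{t1.11} once more on the ``$q^{3m}$-part'' of the generating function.

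By the Frobenius-style congruences $f_1^3 \equiv f_3$ and $f_2^3 \equiv f_6 \pmod 3$, the generating function \eqref{e1.4} with $r = 3(k-j)+5$ and $s = 3k+1$ (so $s-r = 3j-4$) simplifies, after using $f_2^4 \equiv f_2 f_6\pmod 3$ to cancel one copy of $f_6$, to
\begin{align*}
\frac{f_2^{3j-4}}{f_1^{3k+1}} \;\equiv\; \frac{f_6^{j-1}}{f_3^k\, f_1\, f_2}\pmod 3,
\end{align*}
where $f_6^{-1}$ is read as the power series $1/f_6$ when $j=0$. The prefactor $\frac{f_6^{j-1}}{f_3^k}$ is a series in $q^3$ and therefore commutes with any $3$-dissection. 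Writing the $3$-dissection
\begin{align*}
\frac{1}{f_1 f_2} \;=\; K_0(q^3) \,+\, q\,K_1(q^3) \,+\, q^2\,K_2(q^3)
\end{align*}
and noting $9n+6 \equiv 0\pmod 3$, the coefficient extraction picks out only the $K_0$-piece, giving
\begin{align*}
[q^{9n+6}]\!\left(\frac{f_6^{j-1}}{f_3^k f_1 f_2}\right) \;=\; [q^{3n+2}]\!\left(\frac{f_2^{j-1}}{f_1^k}\,K_0(q)\right).
\end{align*}

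The next step is to obtain an explicit product expression for $K_0(q)$ modulo $3$. I would convolve the classical $3$-dissection of $\frac{1}{f_1}$ (derived from the pentagonal number theorem via Jacobi's triple product) with the analogous dissection of $\frac{1}{f_2}$ (obtained by the substitution $q\mapsto q^2$), and collect terms by residue mod $3$. The expected outcome is that $K_0(q) \equiv \frac{f_2^{\alpha}}{f_1^{\beta}}\cdot R(q) \pmod 3$, where $R(q)$ is a ratio of $f_{3m}$'s, and hence each factor $f_{3m}$ can be replaced modulo $3$ by $f_m^3$ via Frobenius. Absorbing these cube factors into the $f_1$- and $f_2$-exponents and multiplying by the outer $\frac{f_2^{j-1}}{f_1^k}$ then recasts the whole expression, modulo $3$, in the standard form $\frac{f_2^{s'-r'}}{f_1^{s'}}$ with $s'=3k'+1$ and $r'=3(k'-j')+5$, and with $k'\ge j'\ge 0$ whenever $k\ge j\ge 0$. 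Theorem~\ref{t1.11} applied to this new pair $(k',j')$ then immediately gives that the coefficient of $q^{3n+2}$ vanishes modulo $3$, completing the proof.

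The main obstacle will be the explicit $3$-dissection of $\frac{1}{f_1 f_2}$ modulo $3$. While the individual dissections of $\frac{1}{f_1}$ and $\frac{1}{f_2}$ are standard, convolving them and re-expressing the residue-$0$ component $K_0(q)$ as a clean eta-quotient of the anticipated shape $\frac{f_2^\alpha}{f_1^\beta}\cdot R(q)$ is a delicate bookkeeping exercise à la Hirschhorn. The final verification, that the derived pair $(k',j')$ lies in the admissible range $k'\ge j'\ge 0$, will follow directly from the exponent tracking and is the routine concluding step.
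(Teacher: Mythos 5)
Your overall strategy (two successive $3$-dissections of the base-case generating function) is in the right spirit, but the step you leave as an ``expected outcome'' is false, and the reduction you build on it collapses. Writing $\tfrac{1}{f_1f_2}=K_0(q^3)+qK_1(q^3)+q^2K_2(q^3)$ and using the known dissection \eqref{e4.4}, one finds
\begin{align*}
K_0(q)=\frac{f_3^9}{f_1^6f_2^2f_6^3}-2q\,\frac{f_6^6}{f_1^3f_2^5}\equiv \frac{f_1^{21}}{f_2^{11}}+q\,\frac{f_2^{13}}{f_1^{3}}\pmod 3,
\end{align*}
which is a sum of two eta-quotients, not a single quotient of the form $\tfrac{f_2^{\alpha}}{f_1^{\beta}}R(q)$ with $R$ a series in $q^3$. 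Consequently $\tfrac{f_2^{j-1}}{f_1^{k}}K_0(q)$ is a two-term sum whose $f_1$-exponents are $k-21$ and $k+3$, both $\equiv k\pmod 3$ for arbitrary $k$, whereas Theorem \ref{t1.11} requires an $f_1$-exponent $\equiv 1\pmod 3$ (and an $f_2$-exponent $\equiv 2\pmod 3$); so no admissible pair $(k',j')$ exists and the planned appeal to Theorem \ref{t1.11} cannot be made. A secondary structural problem: after the first extraction your prefactor becomes $\tfrac{f_2^{j-1}}{f_1^{k}}$, which is no longer a series in $q^3$ and hence does not commute with a second extraction, so carrying general $(k,j)$ through both dissections is not legitimate as written.

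The mechanism that actually makes the theorem true is different. The paper proves only the base case $a_{5,1}(9n+6)\equiv 0\pmod 3$ and then lifts to general $k\ge j\ge 0$ via Lemma \ref{l2.5} with $(p,\lambda)=(3,2)$ and $C=6$; for the base case it writes $\tfrac{1}{f_1f_2^4}\equiv\tfrac{1}{f_3f_6}\cdot\tfrac{f_1^2}{f_2}$, applies the two-term dissection \eqref{e2.5} of $\phi(-q)$ to get $\sum a_{5,1}(3n)q^n\equiv\tfrac{f_3^2}{f_6}\cdot\tfrac{1}{f_1f_2}$, and then observes from \eqref{e4.4} that the $q^{3n+2}$ component of $\tfrac{1}{f_1f_2}$ carries an explicit factor of $3$. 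Your route can be repaired in the same spirit: with the correct $K_0$ one checks that $\tfrac{1}{f_2}K_0(q)\equiv\tfrac{f_3^7}{f_6^4}+q\,\tfrac{f_6^4}{f_3}\pmod 3$ has no $q^{3n+2}$ terms, which yields $a_{5,1}(9n+6)\equiv 0\pmod 3$; but you must then conclude with Lemma \ref{l2.5} rather than by folding the prefactor into a new instance of Theorem \ref{t1.11}.
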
	

	\begin{theorem}\label{t1.13}
		Let $p\equiv 5, 11\pmod {12}$ be a prime and $r$,  $1\le r\le p-1$, such that $p\mid 3r+1$. Then for all $n\ge 0, k\ge j\ge 0$,
		\begin{align}\label{en1.13}
			a_{p(k-j)+(p-4),\;pk+p}(pn+r)\equiv 0 \pmod p.
		\end{align}
	\end{theorem}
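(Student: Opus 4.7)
The plan is to reduce the relevant generating function modulo $p$ via Frobenius, recognize an Euler--Jacobi expansion of $f_2^4$, and then use that $-3$ is a quadratic non-residue modulo $p$ to show that every contributing term carries an explicit factor of $p$.

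First I would observe that with $s=pk+p$ and $r=p(k-j)+(p-4)$ one has $s-r=pj+4$, so that \eqref{e1.4} specializes to $f_2^{pj+4}/f_1^{p(k+1)}$. Applying the Frobenius congruences $f_1^p\equiv f_p$ and $f_2^p\equiv f_{2p}\pmod p$ gives
\begin{align*}
\sum_{n\ge 0} a_{p(k-j)+(p-4),\,pk+p}(n)\,q^n \;\equiv\; \frac{f_{2p}^{\,j}}{f_p^{\,k+1}}\cdot f_2^{\,4} \pmod{p}.
\end{align*}
Since $f_{2p}^{\,j}/f_p^{\,k+1}$ is a power series in $q^p$, proving \eqref{en1.13} reduces to showing that the coefficient of $q^{pN+r}$ in $f_2^4$ is divisible by $p$ for every $N\ge 0$.

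Next, using Euler's pentagonal number theorem and Jacobi's identity (each with $q\mapsto q^2$), I would write
\begin{align*}
f_2^{\,4}\;=\;\Bigl(\sum_{m\in\mathbb{Z}}(-1)^m q^{m(3m-1)}\Bigr)\Bigl(\sum_{n\ge 0}(-1)^n(2n+1)\,q^{n(n+1)}\Bigr),
\end{align*}
so the exponent of a typical term is $E(m,n):=m(3m-1)+n(n+1)$. Completing the square after multiplying through by $12$ produces the key identity
\begin{align*}
4\bigl(3E(m,n)+1\bigr)\;=\;(6m-1)^2+3(2n+1)^2.
\end{align*}
Hence whenever $E(m,n)\equiv r\pmod p$ with $p\mid 3r+1$, one obtains $(6m-1)^2+3(2n+1)^2\equiv 0\pmod p$.

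To conclude, note that $p\equiv 5,11\pmod{12}$ is equivalent to $p$ being an odd prime with $p\equiv 2\pmod 3$, which makes $-3$ a quadratic non-residue modulo $p$. Thus $(6m-1)^2\equiv -3(2n+1)^2\pmod p$ can hold only when $p\mid 2n+1$ (and consequently $p\mid 6m-1$). Since every contributing term in the expansion carries the explicit factor $(2n+1)$, the coefficient of $q^{pN+r}$ in $f_2^4$ is divisible by $p$, which completes the reduction. I expect the main obstacle to be locating the completing-the-square identity $4(3E+1)=(6m-1)^2+3(2n+1)^2$ that bridges the modular hypothesis $p\mid 3r+1$ to representability by $X^2+3Y^2$; once this is in hand, the hypothesis $p\equiv 5,11\pmod{12}$ is precisely the condition ensuring that $X^2+3Y^2\equiv 0\pmod p$ forces $p\mid X$ and $p\mid Y$, and the non-residue argument is then decisive.
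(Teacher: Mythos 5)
Your proposal is correct and follows essentially the same route as the paper: both reduce modulo $p$ to the coefficient of $q^{pN+r}$ in $f_2^4=f_2\cdot f_2^3$, expand via the pentagonal number theorem and Jacobi's identity, complete the square to obtain $(6m-1)^2+3(2n+1)^2\equiv 4(3r+1)\equiv 0\pmod p$, and use that $-3$ is a quadratic non-residue together with the explicit factor $(2n+1)$. The only cosmetic difference is that you inline the passage from the $j=k=0$ case to general $j,k$ (via the $q^p$-series prefactor $f_{2p}^{\,j}/f_p^{\,k+1}$), which the paper packages as its Lemma \ref{l2.5}.
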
	

	
	\begin{theorem}\label{t1.15} For all $n\ge0$, $k\ge j\ge0$, primes $p\ge 5$, and all $r$, $1\le r\le p-1$, such that $4r+1$ is a quadratic non-residue modulo $p$,
		\begin{align}
			a_{p(k-j)+(p-3),\;pk+p}(pn+r)&\equiv 0\pmod p. \label{e1.153}
		\end{align}
	\end{theorem}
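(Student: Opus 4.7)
The plan is to work directly with the generating function \eqref{e1.4} and reduce it modulo $p$ using the standard Frobenius-type identity $f_m^p\equiv f_{mp}\pmod p$ (a consequence of $(1-x)^p\equiv 1-x^p\pmod p$), then extract the relevant residue classes via Jacobi's expansion of $f_2^3$. First I would substitute the given parameters into \eqref{e1.4}: since $s-r=(pk+p)-(p(k-j)+(p-3))=pj+3$ and $s=p(k+1)$, one has
\[\sum_{n\ge 0} a_{p(k-j)+(p-3),\;pk+p}(n)\,q^n \;=\; \frac{f_2^{pj+3}}{f_1^{p(k+1)}}.\]
Applying the Frobenius identity to $f_2^{pj}$ and $f_1^{p(k+1)}$ separately, this reduces modulo $p$ to
\[\frac{f_2^{pj+3}}{f_1^{p(k+1)}}\;\equiv\;\frac{f_{2p}^j}{f_p^{k+1}}\cdot f_2^3 \pmod p.\]

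Next I would exploit the fact that $f_{2p}^j/f_p^{k+1}$ is a power series in $q^p$ (evident from the infinite-product form of each factor), so it contributes only exponents divisible by $p$. The residues modulo $p$ of the exponents appearing in the full product are therefore dictated entirely by $f_2^3$. Invoking Jacobi's identity
\[f_2^3 \;=\; \sum_{t\ge 0}(-1)^t(2t+1)\,q^{t(t+1)},\]
the coefficient of $q^{pn+r}$ modulo $p$ is an $\mathbb{F}_p$-linear combination of weights $(2t+1)$ over indices $t\ge 0$ with $t(t+1)\equiv r\pmod p$. Since $p\ge 5$, multiplication by $4$ is invertible modulo $p$, and completing the square rewrites this congruence as $(2t+1)^2\equiv 4r+1\pmod p$. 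This has no solution precisely when $4r+1$ is a quadratic non-residue modulo $p$, and so the coefficient of $q^{pn+r}$ vanishes modulo $p$, establishing \eqref{e1.153}.

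I do not foresee any serious obstacle: the argument combines one Frobenius step, one application of Jacobi's identity, and a single quadratic-residue observation. The only minor point to verify is that the degenerate case $2t+1\equiv 0\pmod p$ causes no issue, for then $(2t+1)^2\equiv 0\pmod p$ would force $4r+1\equiv 0\pmod p$, contradicting the hypothesis that $4r+1$ is a non-zero quadratic non-residue (and in any event the weight $(2t+1)$ itself vanishes modulo $p$, so any such term contributes trivially). This strategy closely parallels the techniques used by Sellers \cite{sellers} to treat $a_s(n)$ and by Amdeberhan--Sellers--Singh \cite{ajsi} to treat $a_r^*(n)$, and it should also be the template for Theorems \ref{t1.11}--\ref{t1.13}.
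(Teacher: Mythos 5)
Your proposal is correct and follows essentially the same route as the paper: reduce $f_2^3/f_1^p$ modulo $p$ to $f_2^3/f_p$, expand $f_2^3$ by Jacobi's identity, and rule out the exponent class $pn+r$ by completing the square to $(2N+1)^2\equiv 4r+1\pmod p$. The only cosmetic difference is that you inline the passage from the base case $j=k=0$ to general $j,k$ (via the Frobenius congruence and the observation that $f_{2p}^j/f_p^{k+1}$ is a series in $q^p$), whereas the paper packages exactly that step as its Lemma \ref{l2.5}.
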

	
	\begin{theorem}\label{t1.15b} Let $p$ prime, $p\ge 5$, and let $r$, $1\le r\le p-1$, such that $4r+1\equiv 0\pmod p$. Then for all $n\ge0$, $k\ge j\ge0$,
		\begin{align}
			a_{p(k-j)+(p-3),\;pk+p}(pn+r)&\equiv 0\pmod p. \label{e1.152}
		\end{align}
	\end{theorem}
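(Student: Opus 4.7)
The plan is to convert the generating function for $a_{p(k-j)+(p-3),\,pk+p}(n)$ into a form whose coefficient at index $pn+r$ can be read off modulo $p$ via Jacobi's identity. Using \eqref{e1.4} with $r\mapsto p(k-j)+(p-3)$ and $s\mapsto pk+p$, one gets $s-r=pj+3$ and $s=p(k+1)$, so the generating function equals $f_2^{pj+3}/f_1^{p(k+1)}$. Invoking the elementary congruence $f_m^p\equiv f_{pm}\pmod p$ (a consequence of the Frobenius on $\mathbb{F}_p$-coefficient power series), I would rewrite
\begin{align*}
	\sum_{n\ge 0} a_{p(k-j)+(p-3),\,pk+p}(n)\,q^n \;=\; \frac{f_2^{pj+3}}{f_1^{p(k+1)}} \;\equiv\; \frac{f_{2p}^{\,j}}{f_p^{\,k+1}}\cdot f_2^{3} \pmod p.
\end{align*}

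The factor $f_{2p}^{j}/f_p^{k+1}$ is a power series in $q^p$, so it shifts exponents by multiples of $p$ and does not change residues modulo $p$. Consequently, the coefficient of $q^{pn+r}$ in the product is a $\mathbb{Z}$-linear combination of coefficients of $q^{p\ell+r}$ in $f_2^{3}$, and it suffices to prove that every such coefficient of $f_2^{3}$ vanishes modulo $p$.

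For this I would apply Jacobi's identity $f_1^{3}=\sum_{n\ge0}(-1)^n(2n+1)\,q^{n(n+1)/2}$, which upon substituting $q\mapsto q^2$ yields
\begin{align*}
	f_2^{3} \;=\; \sum_{n\ge 0} (-1)^n(2n+1)\,q^{n(n+1)}.
\end{align*}
A term contributes to the coefficient of $q^{p\ell+r}$ only when $n(n+1)\equiv r\pmod p$, i.e. $(2n+1)^2\equiv 4r+1\pmod p$. By hypothesis $4r+1\equiv 0\pmod p$, so this forces $2n+1\equiv 0\pmod p$, in which case the weight $(-1)^n(2n+1)$ is itself divisible by $p$.

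The argument is entirely elementary, so I do not anticipate a genuine obstacle; the only point requiring care is the observation that $f_{2p}^{j}/f_p^{k+1}$ is a series in $q^p$, which is what allows the reduction to the single factor $f_2^{3}$ independent of $j$ and $k$. This is exactly the same mechanism that, with $f_2^{3}$ replaced by $f_1^{3}$ and $4r+1$ by $8r+1$, would recover the Amdeberhan--Sellers--Singh style congruences for $a_{p-1}^{*}(n)$, so the proposed proof parallels those results and fits the theme of Theorems~\ref{t1.13}--\ref{t1.15}.
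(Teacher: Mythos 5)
Your proposal is correct and follows essentially the same route as the paper: reduce $f_2^{pj+3}/f_1^{p(k+1)}$ modulo $p$ to a series in $q^p$ times $f_2^3$, expand $f_2^3$ by Jacobi's identity, and observe that $n(n+1)\equiv r\pmod p$ forces $(2n+1)^2\equiv 4r+1\equiv 0\pmod p$, so the surviving terms carry a weight $2n+1$ divisible by $p$. The only cosmetic difference is that you absorb the passage from the base case $j=k=0$ to general $j,k$ directly into the factorization, whereas the paper packages that step as its Lemma \ref{l2.5}.
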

	\begin{theorem}\label{t1.16} Let $p$ be prime, $p\ge5$, and let $r$, $1\le r\le p-1$, be a quadratic non-residue modulo $p$. Then for all $n\ge0$, $k \ge j\ge0$,
		\begin{align}
			a_{p(k-j)+(p-1),\;pk+(p-2)}(pn+r)&\equiv 0\pmod p. \label{e1.161}
		\end{align}
	\end{theorem}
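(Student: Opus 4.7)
The plan is to reduce the generating function $f_2^{s-r}/f_1^s$ with $(r,s) = (p(k-j)+(p-1),\, pk+(p-2))$ modulo $p$ to the product of a series in $q^p$ and a classical theta factor whose nonzero exponents are squares. Computing the two exponents gives $s-r = pj-1$ and $s = p(k+1)-2$, so by \eqref{e1.4}
\begin{equation*}
\sum_{n\ge 0} a_{p(k-j)+(p-1),\,pk+(p-2)}(n)\,q^n \;=\; \frac{f_2^{\,pj-1}}{f_1^{\,p(k+1)-2}} \;=\; \frac{f_2^{\,pj}}{f_1^{\,p(k+1)}}\cdot\frac{f_1^{\,2}}{f_2}.
\end{equation*}
The algebraic split is engineered so that the two exponents in the first factor are multiples of $p$, while the leftover is exactly the theta quotient one wants.

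Next, I apply the Frobenius congruence $f_m^{\,p}\equiv f_{mp}\pmod p$, which is immediate from the multinomial theorem applied to $(q^m;q^m)_\infty^{\,p}$. This yields
\begin{equation*}
\frac{f_2^{\,pj}}{f_1^{\,p(k+1)}} \;\equiv\; \frac{f_{2p}^{\,j}}{f_p^{\,k+1}}\pmod p,
\end{equation*}
and the right-hand side is a power series in $q^p$. Consequently, modulo $p$, the coefficient of $q^{pn+r}$ in the full generating function is an integer linear combination of coefficients of $q^\ell$ in $f_1^{\,2}/f_2$ with $\ell\equiv r\pmod p$.

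The decisive step is the classical Gauss evaluation
\begin{equation*}
\frac{f_1^{\,2}}{f_2}\;=\;\sum_{m=-\infty}^{\infty}(-1)^{m}\,q^{\,m^{2}},
\end{equation*}
which says that the only exponents occurring in $f_1^{\,2}/f_2$ are perfect squares. Since $r$ is, by hypothesis, a quadratic non-residue modulo $p$ with $1\le r\le p-1$, no integer $m$ satisfies $m^2\equiv r\pmod p$. Hence every coefficient that could contribute to $q^{pn+r}$ on the right-hand side vanishes, and \eqref{e1.161} follows.

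There is no serious obstacle beyond locating the correct algebraic split; once the generating function is massaged into the shape $(\text{series in }q^p)\cdot(f_1^{\,2}/f_2)$, the residue-class analysis is automatic. The same Frobenius-plus-theta device underlies Theorems \ref{t1.15} and \ref{t1.15b}, where a Jacobi-type identity replaces the Gauss identity and the hypothesis is phrased in terms of $4r+1$ rather than $r$ itself; here the match between the identity $\sum(-1)^m q^{m^2}$ and the residue condition on $r$ is the crispest possible.
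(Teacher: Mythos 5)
Your argument is correct and matches the paper's proof in all essentials: the same split into a series in $q^p$ times $\phi(-q)=f_1^2/f_2=\sum_{m}(-1)^m q^{m^2}$, followed by the observation that a quadratic non-residue $r$ cannot be congruent to a square modulo $p$. The only cosmetic difference is that you treat general $j,k$ directly, whereas the paper proves the case $j=k=0$ and then lifts it via Lemma \ref{l2.5}, whose proof is exactly the Frobenius factorization you inlined.
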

	
	\begin{theorem}\label{t1.18}Let $p$ be prime, $p\ge5$, and let $r$, $1\le r\le p-1$, be such that $12r+1$ is a quadratic non-residue modulo $p$. Then for all $n\ge0$, $k\ge j\ge0$,
	\begin{align}
	a_{p(k-j)+(p-1),\;pk+p}(pn+r)&\equiv 0\pmod p. \label{e1.18}
	\end{align}
	\end{theorem}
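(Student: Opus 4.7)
The plan is to reduce the generating function for $a_{p(k-j)+(p-1),\,pk+p}(n)$ modulo $p$ to a dissected form involving only the pentagonal-number series for $f_2$, and then invoke a quadratic-residue obstruction.

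First, I would compute the exponent $s-r = (pk+p)-(p(k-j)+(p-1)) = pj+1$, so by \eqref{e1.4},
\begin{align*}
\sum_{n=0}^\infty a_{p(k-j)+(p-1),\,pk+p}(n)\,q^n = \frac{f_2^{pj+1}}{f_1^{p(k+1)}}.
\end{align*}
Applying the standard congruence $f_m^{p}\equiv f_{mp}\pmod p$ (from the freshman's dream) to both numerator and denominator yields
\begin{align*}
\frac{f_2^{pj+1}}{f_1^{p(k+1)}} \equiv \frac{f_2\cdot f_{2p}^{j}}{f_p^{k+1}} \pmod p.
\end{align*}
The crucial observation is that $f_{2p}^{j}/f_p^{k+1}$ is a power series in $q^p$. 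Hence, for $1\le r\le p-1$, the coefficient of $q^{pn+r}$ in the right-hand side is a $\mathbb{Z}_p$-linear combination of coefficients of $q^{pN+r}$ in $f_2$ alone, where $N$ ranges over nonnegative integers. It therefore suffices to show $[q^{pN+r}]f_2 \equiv 0 \pmod p$ for every $N\ge 0$.

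Next, I would use Euler's pentagonal number theorem in the form
\begin{align*}
f_2 = (q^2;q^2)_\infty = \sum_{k\in\mathbb{Z}}(-1)^k q^{k(3k-1)}.
\end{align*}
So the coefficient of $q^{pN+r}$ in $f_2$ is nonzero only if there exists $k\in\mathbb{Z}$ with $k(3k-1)\equiv r\pmod p$. Since $p\ge 5$, the integer $12$ is invertible modulo $p$, and multiplying the congruence $3k^2-k\equiv r\pmod p$ by $12$ and completing the square gives
\begin{align*}
(6k-1)^2 \equiv 12r+1 \pmod p.
\end{align*}
By hypothesis $12r+1$ is a quadratic non-residue modulo $p$, so no such $k$ exists. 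Consequently every coefficient of $q^{pN+r}$ in $f_2$ vanishes modulo $p$, which yields \eqref{e1.18}.

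The argument is essentially a dissection-plus-pentagonal-number calculation, and the only substantive step is the completion of the square that converts the quadratic condition on pentagonal numbers into the residue condition on $12r+1$; the rest is routine bookkeeping with the $f_m$-notation. No serious obstacle is anticipated, though care must be taken to confirm that $p\ge5$ guarantees $12$ is invertible and that the dissection genuinely separates residue classes modulo $p$.
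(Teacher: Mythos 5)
Your proposal is correct and follows essentially the same route as the paper: reduce $f_1^{p(k+1)}$ and $f_2^{pj}$ modulo $p$ via the binomial congruence, observe that the leftover factor is a series in $q^p$, and kill the coefficient of $q^{pn+r}$ in $f_2=\sum_k(-1)^kq^{3k^2-k}$ by completing the square to the condition $(6k-1)^2\equiv 12r+1\pmod p$. The only cosmetic difference is that you handle general $j,k$ in one pass, whereas the paper first treats $j=k=0$ and then lifts via its Lemma \ref{l2.5}, which encapsulates exactly your ``function of $q^p$'' observation.
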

	
	\begin{theorem}\label{t1.19}Let $p$ be prime, $p\ge3$, and let $r$, $1\le r\le p-1$, such that $8r+1$ is a quadratic non-residue modulo $p$. Then for all $n\ge0$, $k\ge j\ge0$,
		\begin{align}
			a_{p(k-j)+(p-1),\;pk+(p+1)}(pn+r)&\equiv 0\pmod p. \label{e1.19}
		\end{align}
	\end{theorem}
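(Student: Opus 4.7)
The plan is to follow the template of the companion Theorems~\ref{t1.13}--\ref{t1.18}: reduce the generating function modulo $p$ via the Frobenius-type congruence $f_m^{p} \equiv f_{mp} \pmod p$ (which is immediate from $\binom{p}{i}\equiv 0 \pmod p$ for $1\le i \le p-1$), and then identify the small residual factor that remains outside the $p$-th powers. In the present case, the residual factor will turn out to be Ramanujan's theta function
\[
\psi(q) \;=\; \sum_{m\ge 0} q^{m(m+1)/2} \;=\; \frac{f_2^{2}}{f_1},
\]
whose support is precisely the triangular numbers; the hypothesis that $8r+1$ is a quadratic non-residue modulo $p$ is then exactly the obstruction needed to forbid any triangular number from landing in the residue class $r$ modulo $p$.

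First I would unpack the exponents. Setting the colour parameters $r' := p(k-j)+(p-1)$ and $s':= pk+(p+1)$, one finds $s' - r' = pj + 2$ and $s' = p(k+1)+1$. Substituting into \eqref{e1.4} and grouping the factors so that the $p$-th-power portion is isolated,
\[
\sum_{n\ge 0} a_{p(k-j)+(p-1),\,pk+(p+1)}(n)\,q^n \;=\; \frac{f_2^{pj+2}}{f_1^{p(k+1)+1}} \;=\; \frac{f_2^{pj}}{f_1^{p(k+1)}} \cdot \frac{f_2^{2}}{f_1}.
\]
Applying the Frobenius congruence to the first factor and invoking $\psi(q) = f_2^2/f_1$, I would obtain
\[
\sum_{n\ge 0} a_{p(k-j)+(p-1),\,pk+(p+1)}(n)\,q^n \;\equiv\; \frac{f_{2p}^{\,j}}{f_p^{k+1}} \cdot \psi(q) \pmod{p}.
\]
Since $f_{2p}^{\,j}/f_p^{k+1}$ is a power series in $q^p$, the coefficient of $q^{pn+r}$ on the right depends only on those coefficients $[q^m]\psi(q)$ with $m \equiv r\pmod p$; such an $m$ must be a triangular number, and the congruence $m(m+1)/2 \equiv r \pmod p$ rearranges (legitimately, since $p \ge 3$ makes $2$ a unit) to $(2m+1)^2 \equiv 8r + 1 \pmod p$, which is unsolvable by hypothesis.

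I do not anticipate a genuine obstacle. The argument is essentially bookkeeping once $\psi(q)$ is recognised as the residual factor, and the non-residue hypothesis on $8r+1$ is calibrated exactly to rule out the triangular-number congruence above. The only delicate point is the grouping in the very first step: the exponents $pj + 2$ and $p(k+1) + 1$ must be split so that precisely two factors of $f_2$ and one factor of $f_1$ are left outside the $p$-th powers, so that $\psi(q)$ (rather than some other theta-like object) emerges after Frobenius reduction. Getting this split right is essentially what pins down the particular pair of colour parameters in the statement.
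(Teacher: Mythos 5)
Your proposal is correct and follows essentially the same route as the paper: the paper also writes $f_2^2/f_1^{p+1}\equiv \psi(q)/f_p \pmod p$ using \eqref{et2}, notes that $pn+r\equiv m(m+1)/2\pmod p$ forces $(2m+1)^2\equiv 8r+1\pmod p$, and concludes from the non-residue hypothesis. The only cosmetic difference is that the paper first proves the case $j=k=0$ and then lifts to general $j,k$ via Lemma~\ref{l2.5}, whereas you carry the factor $f_{2p}^{\,j}/f_p^{k+1}$ (a series in $q^p$) through the whole computation directly.
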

		\begin{theorem}\label{t1.20} For all $n\ge0$, $k\ge j\ge0$, primes $p\ge 5$, and all $r$, $1\le r\le p-1$, such that $3r+1$ is a quadratic non-residue modulo $p$ or $3r+1\equiv 0\pmod p$,
		\begin{align}
			a_{p(k-j)+(p-3),\;pk+(p+2)}(pn+r)&\equiv 0\pmod p. \label{e1.20}
		\end{align}
	\end{theorem}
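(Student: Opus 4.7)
I would follow the template used for Theorems~\ref{t1.15}--\ref{t1.19}: strip the generating function to a small residual factor modulo $p$, then invoke a Jacobi-type theta identity that exposes the ``$3r+1$'' condition via completing the square.

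From \eqref{e1.4}, writing $r_0 = p(k-j)+(p-3)$ and $s = pk+(p+2)$, we have $s - r_0 = pj+5$ and $s = p(k+1)+2$, so
$$\sum_{n\ge 0} a_{r_0,\,s}(n)\,q^n \;=\; \frac{f_2^{\,pj+5}}{f_1^{\,p(k+1)+2}}.$$
Applying $f_m^{\,p} \equiv f_{mp} \pmod p$ reduces the right side modulo $p$ to $\dfrac{f_{2p}^{\,j}\,f_2^{\,5}}{f_p^{\,k+1}\,f_1^{\,2}}$. The factor $f_{2p}^{\,j}/f_p^{\,k+1}$ is a power series in $q^p$, so it contributes only to exponents divisible by $p$; hence the theorem reduces to showing
$$[q^{\,pn+r}]\,\frac{f_2^{\,5}}{f_1^{\,2}} \;\equiv\; 0 \pmod p$$
whenever $3r+1$ is a quadratic non-residue modulo $p$ or $3r+1 \equiv 0 \pmod p$.

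The heart of the argument is the Jacobi-type identity
\begin{equation}\label{key-id-1-20}
\frac{f_2^{\,5}}{f_1^{\,2}} \;=\; \sum_{m\in\mathbb{Z}} (-1)^m (3m+1)\, q^{m(3m+2)},
\end{equation}
which I would derive from the Jacobi triple product
$$\sum_{n\in\mathbb{Z}} z^n q^{n(3n+2)} \;=\; (q^6;q^6)_\infty\,(-z q^5;q^6)_\infty\,(-q/z;q^6)_\infty$$
by a logarithmic derivative in $z$ evaluated at $z=-1$, followed by simplification of the resulting eta product (equivalently, since $q^{1/3}\,f_2^{\,5}/f_1^{\,2}=\eta(2\tau)^5/\eta(\tau)^2$ lies in a finite-dimensional space of weight-$3/2$ modular forms, \eqref{key-id-1-20} can be checked by matching sufficiently many Fourier coefficients). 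Granting \eqref{key-id-1-20}, the coefficient of $q^{pn+r}$ equals $\sum_{m:\,m(3m+2)\equiv r\,(\mathrm{mod}\,p)} (-1)^m (3m+1)$, and the algebraic identity $3\,m(3m+2)+1=(3m+1)^2$ recasts the summation constraint as $(3m+1)^2 \equiv 3r+1 \pmod p$. If $3r+1$ is a non-residue the congruence has no solution, so the sum is empty. If $3r+1 \equiv 0 \pmod p$, then every contributing $m$ satisfies $3m+1 \equiv 0 \pmod p$, and every summand is divisible by $p$. Either way the coefficient vanishes modulo $p$, and \eqref{e1.20} follows.

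The main obstacle is a careful derivation of \eqref{key-id-1-20}. Unlike the classical expansions of $\varphi(-q)=f_1^2/f_2$, $\psi(q)=f_2^2/f_1$, $f_2$, and $f_2^3$ invoked for Theorems~\ref{t1.15}--\ref{t1.19}, the identity \eqref{key-id-1-20} is not standard Euler/Jacobi/Gauss fare, so the bulk of the technical work lies in its proof (via JTP plus logarithmic derivative, or by a dimension count in the relevant modular forms space). Once \eqref{key-id-1-20} is in place, the ``completing the square'' step is a direct analogue of what was done for Theorems~\ref{t1.18} and \ref{t1.19}.
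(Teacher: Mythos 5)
Your proposal is correct and follows essentially the same route as the paper: reduce $f_2^{\,pj+5}/f_1^{\,p(k+1)+2}$ modulo $p$ to a $q^p$-series times $f_2^5/f_1^2$, expand the latter as $\sum_k(-1)^k(3k+1)q^{3k^2+2k}$, and complete the square via $(3k+1)^2=3\cdot k(3k+2)+1$ so that the non-residue case gives an empty sum and the $3r+1\equiv 0$ case is killed by the factor $3k+1$. The identity you flag as the main technical obstacle is exactly the paper's Lemma~\ref{lepq}, quoted from Hirschhorn's \emph{The Power of $q$} (Equation 10.7.7), so no new derivation is needed; your handling of the general $(j,k)$ case inline is just the content of Lemma~\ref{l2.5}.
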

	This paper is organized as follows. In Section \ref{s2}, we collect some preliminary results required to prove our main results. In Section \ref{s3}, we give alternate proof of congruences \eqref{e1.6} and \eqref{e1.5}. In Sections \ref{s4} and \ref{s5}, we prove the various results mentioned above. 
	\section{Preliminaries}\label{s2}
	In this section, we present some basic definitions and preliminary results on Ramanujan's theta functions. Ramanujan's general theta function is defined as
	\begin{align}
		f(a,b)=\sum_{n=-\infty}^{\infty}a^{n(n+1)/2}b^{n(n-1)/2},\;\; |ab|<1 \label{e2.1}. 
	\end{align}
	The well-known Jacobi triple product identity {{\cite[Entry 19, p.35]{rnb}}} is given by
	\begin{align*}
		f(a,b)=(-a;ab)_\infty(-b;ab)_\infty(ab;ab)_\infty.
	\end{align*}
	We also require particular cases of \eqref{e2.1}:
	\begin{align}
		\phi(q):&=f(q,q)=\sum_{n=-\infty}^{\infty}q^{n^2}=1+\sum_{n=1}^{\infty}q^{n^2} \label{et1}\\
		\psi(q):&=f(q,q^3)=\sum_{n=0}^{\infty}q^{n(n+1)/2}=\dfrac{f_2^2}{f_1}\label{et2}\\
		\phi(-q)&=\dfrac{f_1^2}{f_2}\label{et3}.
	\end{align}
	\begin{lemma}[{{\cite[Equation~1.6.1]{poq}}}]\label{lepnt}
		We have 
		\begin{align*}
			f_1=\sum_{k=-\infty}^{\infty}(-1)^kq^{(3k^2-k)/2}.
		\end{align*}
	\end{lemma}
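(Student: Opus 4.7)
The plan is to reduce the claim, modulo $p$, to a statement about the single universal series $f_2^5/f_1^2$, and then to invoke a theta-series identity for this series whose exponents $n(3n+2)$ satisfy the elementary arithmetic identity $3\,n(3n+2)+1=(3n+1)^2$, converting the hypothesis on $3r+1$ into an emptiness or divisibility condition that forces the relevant coefficients to vanish modulo~$p$.

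First, using \eqref{e1.4} with $s-r=pj+5$ and $s=pk+p+2$, the generating function for $a_{p(k-j)+(p-3),\,pk+(p+2)}(n)$ is
\begin{align*}
\frac{f_2^{pj+5}}{f_1^{pk+p+2}} \;=\; \frac{f_2^{pj}}{f_1^{p(k+1)}}\cdot \frac{f_2^5}{f_1^2}\;\equiv\; \frac{f_{2p}^{\,j}}{f_p^{\,k+1}}\cdot \frac{f_2^5}{f_1^2}\pmod p,
\end{align*}
after invoking the Frobenius-type congruence $f_m^{\,p}\equiv f_{mp}\pmod p$. Because $f_{2p}^{\,j}/f_p^{\,k+1}$ is a power series in $q^p$, the coefficient of $q^{pn+r}$ in the displayed product modulo $p$ depends only on those terms of $f_2^5/f_1^2$ whose exponent is congruent to $r$ modulo~$p$; it therefore suffices to prove that every such coefficient is divisible by~$p$.

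The key input is the theta-series identity
\begin{align*}
\frac{f_2^5}{f_1^2} \;=\; \sum_{n\in\mathbb{Z}} (-1)^n(3n+1)\,q^{n(3n+2)},
\end{align*}
which can be derived from the equality $\phi(q)=f_2^5/(f_1^2 f_4^2)$ combined with a suitable specialization of the quintuple product identity. Establishing this identity (which fits naturally among the preliminary lemmas placed in Section~\ref{s2}) is the principal technical obstacle of the argument; once it is available the rest of the proof is routine.

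Granting the identity, the coefficient of $q^{pM+r}$ in $f_2^5/f_1^2$ equals $\sum(-1)^n(3n+1)$ summed over integers $n$ with $n(3n+2)\equiv r\pmod p$. Since $p\ge 5$ is coprime to $3$, this congruence is equivalent to $(3n+1)^2\equiv 3r+1\pmod p$. If $3r+1$ is a quadratic non-residue modulo~$p$ this has no solution and the inner sum is empty; if instead $3r+1\equiv 0\pmod p$ then $3n+1\equiv 0\pmod p$ for every $n$ contributing to the sum, so each summand is divisible by~$p$. In both cases the coefficient of $q^{pM+r}$ in $f_2^5/f_1^2$ is divisible by $p$ for every $M\ge 0$, and combining this with the Frobenius reduction above yields \eqref{e1.20}.
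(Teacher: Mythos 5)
Your proposal does not prove the statement at hand. The statement is Lemma \ref{lepnt}, Euler's pentagonal number theorem, i.e.\ the expansion $f_1=\sum_{k=-\infty}^{\infty}(-1)^kq^{(3k^2-k)/2}$. What you have written is instead a proof sketch of Theorem \ref{t1.20}: you reduce the congruence \eqref{e1.20} modulo $p$ to the series $f_2^5/f_1^2$, quote the expansion $f_2^5/f_1^2=\sum_{n\in\mathbb{Z}}(-1)^n(3n+1)q^{n(3n+2)}$ --- which is Lemma \ref{lepq} of the paper, not Lemma \ref{lepnt} --- and finish with the quadratic-residue argument on $3r+1$. Nothing in your argument establishes, or even engages with, the pentagonal-number expansion of $f_1$. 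Worse, the one identity that plays the role of lemma-level content in your write-up (the expansion of $f_2^5/f_1^2$) is exactly the step you defer, calling it ``the principal technical obstacle''; so even read as a proof of Theorem \ref{t1.20}, the argument is incomplete at precisely the point where a proof was required.

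For comparison: the paper does not prove Lemma \ref{lepnt} from first principles either; it simply cites \cite[Equation~1.6.1]{poq}. If a self-contained proof is wanted, the natural route uses machinery the paper already states in Section \ref{s2}: in the Jacobi triple product identity $f(a,b)=(-a;ab)_\infty(-b;ab)_\infty(ab;ab)_\infty$ take $a=-q$, $b=-q^2$, so that the product side becomes $(q;q^3)_\infty(q^2;q^3)_\infty(q^3;q^3)_\infty=(q;q)_\infty=f_1$, while the series definition \eqref{e2.1} gives
\begin{align*}
f(-q,-q^2)=\sum_{k=-\infty}^{\infty}(-q)^{k(k+1)/2}(-q^2)^{k(k-1)/2}=\sum_{k=-\infty}^{\infty}(-1)^{k}q^{(3k^2-k)/2},
\end{align*}
since the sign is $(-1)^{k(k+1)/2+k(k-1)/2}=(-1)^{k^2}=(-1)^k$ and the exponent is $k(k+1)/2+k(k-1)=(3k^2-k)/2$. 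That two-line specialization is the argument your proposal would need to contain; the material you actually wrote belongs to (and largely duplicates) the paper's proof of Theorem \ref{t1.20}.
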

	\begin{lemma}[{{\cite[Equation~1.7.1]{poq}}}]\label{l2.7}
		We have 
		\begin{align*}
			f_1^3=\sum_{k=0}^{\infty}(-1)^k(2k+1)q^{k(k+1)/2}.
		\end{align*}
	\end{lemma}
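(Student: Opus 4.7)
The plan is to deduce this classical Jacobi identity from the Jacobi triple product already recorded in the preliminaries. Specifically, I would apply $f(a,b)=(-a;ab)_{\infty}(-b;ab)_{\infty}(ab;ab)_{\infty}$ with $a=-x$ and $b=-q/x$ to obtain
$$(x;q)_{\infty}(q/x;q)_{\infty}(q;q)_{\infty}=\sum_{n=-\infty}^{\infty}(-1)^{n}x^{n}q^{n(n-1)/2},$$
since $(-1)^{n(n+1)/2+n(n-1)/2}=(-1)^{n^{2}}=(-1)^{n}$. Both sides vanish at $x=1$, so the strategy is to cancel the common factor $(1-x)$ from each side and then specialize $x=1$: the product side will yield $f_{1}^{3}$, while the series side will collapse to the right-hand side of Lemma~\ref{l2.7}.

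On the product side, write $(x;q)_{\infty}=(1-x)(xq;q)_{\infty}$; dividing by $1-x$ leaves $(xq;q)_{\infty}(q/x;q)_{\infty}(q;q)_{\infty}$, which at $x=1$ becomes $(q;q)_{\infty}^{3}=f_{1}^{3}$. On the series side, pair the terms indexed by $n$ and $1-n$: the exponent $n(n-1)/2$ is invariant under $n\mapsto 1-n$, while the $x$- and sign-parts contribute $(-1)^{n}(x^{n}-x^{1-n})$. As $n$ runs over $1,2,3,\dots$ its partner $1-n$ runs over $0,-1,-2,\dots$, so
$$\sum_{n=-\infty}^{\infty}(-1)^{n}x^{n}q^{n(n-1)/2}=\sum_{n=1}^{\infty}(-1)^{n}q^{n(n-1)/2}\bigl(x^{n}-x^{1-n}\bigr).$$
Using $x^{n}-x^{1-n}=-x^{1-n}(1-x)(1+x+\cdots+x^{2n-2})$ extracts the vanishing factor; cancelling $(1-x)$ and setting $x=1$ replaces each paired term by $-(2n-1)$. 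Reindexing by $m=n-1$ converts the surviving sum into $\sum_{m=0}^{\infty}(-1)^{m}(2m+1)q^{m(m+1)/2}$, which matches the claimed expansion.

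The main delicate point is the bilateral-to-unilateral pairing in step two: one must group the indices $n$ and $1-n$ (not $n$ and $-n$, which would not respect the exponent $n(n-1)/2$) so that the geometric sum $1+x+\cdots+x^{2n-2}$ emerges with exactly $2n-1$ terms, which is precisely what forces the arithmetic-progression coefficient $(2m+1)$ after the shift. Once this pairing is in place, dividing through by $1-x$ and taking $x\to 1$ is a routine limit computation, and comparing the two resulting expressions completes the proof.
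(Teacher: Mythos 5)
Your proposal is correct, but it is not comparable to anything in the paper itself: the paper offers no proof of Lemma \ref{l2.7} at all, simply quoting it from Hirschhorn's \emph{The Power of $q$} (Equation 1.7.1 of the cited reference). Your derivation supplies what the paper omits, and it does so using only the Jacobi triple product identity already recorded in Section \ref{s2}: the specialization $a=-x$, $b=-q/x$ correctly gives $(x;q)_\infty(q/x;q)_\infty(q;q)_\infty=\sum_{n=-\infty}^{\infty}(-1)^n x^n q^{n(n-1)/2}$ (the sign computation $(-1)^{n^2}=(-1)^n$ and the exponent bookkeeping both check out), the pairing $n\leftrightarrow 1-n$ is exactly the right one since it preserves the exponent $n(n-1)/2$ and flips the sign, the factorization $x^n-x^{1-n}=-x^{1-n}(1-x)(1+x+\cdots+x^{2n-2})$ cleanly extracts the vanishing factor, and the limit $x\to 1$ then produces the coefficient $-(2n-1)$, which after the shift $m=n-1$ yields $\sum_{m\ge 0}(-1)^m(2m+1)q^{m(m+1)/2}$ against $f_1^3$ on the product side. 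This is in fact the classical Jacobi argument, essentially the one found in the cited source, so in spirit you have reproduced the ``paper's'' proof at one remove; what your write-up buys is self-containedness (the lemma becomes a consequence of identities already stated in the preliminaries rather than an external import), at the modest cost of needing the routine uniform-convergence justification for passing the limit $x\to 1$ inside the infinite sum, which you correctly flag as the only analytic point requiring care.
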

	\begin{lemma}[{{\cite[Equation~10.7.7]{poq}}}]\label{lepq} We have
	\begin{align*}
	\dfrac{f_2^5}{f_1^2}=\sum_{k=-\infty}^{\infty}(-1)^k(3k+1)q^{3k^2+2k}.
	\end{align*}
	\end{lemma}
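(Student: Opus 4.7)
The plan is to prove this identity, which appears as \cite[Eq.~10.7.7]{poq}, as a consequence of the quintuple product identity (QPI), using the same differentiation-and-evaluation device that yields Lemma~\ref{l2.7} (Jacobi's formula for $f_1^3$) from the Jacobi triple product.

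First, I would recall the QPI in the standard form
$$\prod_{n\geq 1}(1-q^n)(1-zq^{n-1})(1-z^{-1}q^n)(1-z^2q^{2n-1})(1-z^{-2}q^{2n-1})=\sum_{n\in\mathbb Z}q^{n(3n-1)/2}\bigl(z^{3n}-z^{-3n+1}\bigr).$$
Both sides have a simple zero at $z=1$: the product via the factor $(1-zq^{n-1})|_{n=1}=1-z$, and the sum because each bracket $(z^{3n}-z^{-3n+1})$ vanishes there. Dividing by $(1-z)$ and setting $z=1$ — equivalently, differentiating in $z$ and evaluating at $z=1$ — produces the factor $6n-1$ on the sum side (from $\partial_z(z^{3n}-z^{-3n+1})|_{z=1}=3n-(-3n+1)$), while on the product side Euler's identity $(q;q^2)_\infty=f_1/f_2$ simplifies the remaining factors to give the auxiliary identity
$$\sum_{n\in\mathbb Z}(6n-1)\,q^{n(3n-1)/2}=-\frac{f_1^5}{f_2^2}.$$

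To obtain the target identity — with exponent $3k^2+2k$, coefficient $(-1)^k(3k+1)$, and product $f_2^5/f_1^2$ (rather than $n(3n-1)$, $6n-1$, and $f_1^5/f_2^2$) — I would instead specialize the QPI with $z$ chosen as a suitable monomial in $q$ (for instance $z=iq^{1/2}$, so that $z^{3n}-z^{-3n+1}$ contributes both a $q$-power shift and a sign pattern via $i^{3n}=(-i)^n$), combined with the base change $q\mapsto q^2$. After differentiation at the chosen critical value, the resulting bilateral sum acquires the exponent $3k^2+2k$ (up to an integer shift absorbed into a prefactor $q^c$), the coefficient $(-1)^k(3k+1)$ (from recombining the $6n-1$-type derivative with the monomial shift, producing the halved factor $3k+1$), and the product side collapses — via repeated application of Euler's formulas $(q;q^2)_\infty=f_1/f_2$ and $(-q;q)_\infty=f_2/f_1$, together with the dissection $(q;q)_\infty=(q;q^2)_\infty(q^2;q^2)_\infty$ — to the desired $f_2^5/f_1^2$ with $f_1$ and $f_2$ exchanged in roles relative to the auxiliary identity.

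The main obstacle is the combinatorial bookkeeping: identifying the correct specialization of $z$, the correct base change in $q$, and the correct reindexing of the summation so that the exponent, coefficient, and product side all match the claimed identity simultaneously, with consistent signs. The half-factor issue is particularly delicate, since differentiation of the QPI naively gives $(6n-1)$ rather than $(3k+1)$, and the reduction to the latter requires that the monomial substitution contribute an extra factor of $\tfrac12$ through a bilateral reindexing. I would verify correctness by matching the first several coefficients of the target sum — the coefficients of $q^0,q,q^5,q^8,q^{16},q^{21}$ should be $1,2,-4,-5,7,8$ respectively — against the expansion of $f_2^5/f_1^2=\phi(q)\,f_4^2$ computed via the theta-series $\phi(q)=\sum_{m\in\mathbb Z}q^{m^2}$.
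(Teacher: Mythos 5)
The paper never proves Lemma~\ref{lepq}: it is quoted verbatim from \cite[Eq.~10.7.7]{poq}, where it is derived from the quintuple product identity by precisely the differentiate-at-a-vanishing-point device you outline, so your overall strategy is the right one (and matches the source). Moreover, the part you actually carry out is correct: your form of the QPI is the standard one (after $z\mapsto z^{-1}$), both sides do vanish at $z=1$, dividing by $1-z$ and letting $z\to1$ correctly yields the companion identity $\sum_{n\in\mathbb Z}(6n-1)q^{n(3n-1)/2}=-f_1^5/f_2^2$, and your numerical targets ($1,2,-4,-5,7,8$ at $q^0,q^1,q^5,q^8,q^{16},q^{21}$, together with $f_2^5/f_1^2=\phi(q)f_4^2$) are all accurate.

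The genuine gap is that the lemma itself is never proved: everything after ``I would instead specialize the QPI with $z$ chosen as a suitable monomial'' is a plan whose decisive ingredient---the specialization---you leave open, and the one candidate you name, $z=iq^{1/2}$, fails. The differentiation device works only at a point where some single factor of the quintuple product vanishes, so that the product rule collapses to (derivative of that factor) times (the remaining product); substitution without differentiation can never produce the linearly growing coefficients $3k+1$. At $z=iq^{1/2}$ one has $z^2=-q$, so the factors $1-z^{\pm2}q^{2n-1}$ become $1+q^{2n}$ and $1+q^{2n-2}$, and \emph{no} factor of the product vanishes; the method cannot start there. The specialization that completes your argument, in your own normalization, is: replace $q$ by $q^2$ throughout and work at $z=-q$. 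Then the factor $1-z^{-2}q^{2(2n-1)}$ vanishes for $n=1$; its $z$-derivative at $z=-q$ is $-2q^{-1}$, and the remaining factors evaluate to $f_2\cdot(-q;q^2)_\infty^2\cdot f_4^2=f_2^5/f_1^2$, using $(-q;q^2)_\infty=f_2^2/(f_1f_4)$, so the product side contributes $-2q^{-1}f_2^5/f_1^2$. On the sum side,
\begin{align*}
\left.\frac{d}{dz}\sum_{n\in\mathbb Z}q^{3n^2-n}\left(z^{3n}-z^{-3n+1}\right)\right|_{z=-q}
=\sum_{n\in\mathbb Z}\left((-1)^{n+1}\,3n\,q^{3n^2+2n-1}+(-1)^{n}(3n-1)\,q^{3n^2-4n}\right),
\end{align*}
and replacing $n$ by $m+1$ in the second family aligns both families over the exponent $3m^2+2m-1$ with total coefficient $(-1)^{m+1}(6m+2)$, i.e.\ the sum side equals $-2q^{-1}\sum_{m\in\mathbb Z}(-1)^m(3m+1)q^{3m^2+2m}$. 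Cancelling $-2q^{-1}$ gives the lemma. Note that the ``half-factor'' you worried about resolves itself: the $2$ in $6m+2=2(3m+1)$ is matched by the $2$ produced by differentiating $z^{-2}$ in the vanishing factor, not by any factor of $\tfrac12$ arising from reindexing.
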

	We now collect the dissections which will be necessary in our work below.
	\begin{lemma}\label{l2.3a}
		The following 3-dissections hold:
		\begin{align}
			\dfrac{f_2^3}{f_1^3}&= \dfrac{f_6}{f_3}+3q\;\dfrac{f_6^4f_9^5}{f_3^8f_{18}}+6q^2\;\dfrac{f_6^3f_9^2f_{18}^2}{f_3^7}+12q^3\;\dfrac{f_6^2f_{18}^5}{f_3^6f_9}\;\label{e4.3} \\
			\dfrac{1}{f_1f_2}&=\dfrac{f_9^9}{f_3^6f_6^2f_{18}^3}+q\;\dfrac{f_9^6}{f_3^5f_6^3}+3q^2\;\dfrac{f_9^3f_{18}^3}{f_3^4f_6^4}-2q^3\;\dfrac{f_{18}^6}{f_3^3f_6^5}+4q^4\;\dfrac{f_{18}}{f_3^2f_6^6f_9^3}\;\label{e4.4}\\
			f_1f_2&=\dfrac{f_6f_9^4}{f_3f_{18}^2}-q\;f_9f_{18}-2q^2\;\dfrac{f_3f_{18}^4}{f_6f_9^2}\label{e4.5}\\
			\dfrac{f_1^2}{f_2}&= \dfrac{f_9^2}{f_{18}}-2q\;\dfrac{f_3f_{18}^2}{f_6f_{9}}\label{e2.5} \\
			\psi(q)&=P(q^3)+q\psi(q^9)\label{el2.9}\\
			\dfrac{1}{\psi(q)}&=\dfrac{\psi(q^9)}{\psi(q^3)^4}\displaystyle\left(P(q^3)^2-qP(q^3)\psi(q^9)+q^2\psi(q^9)^2\right), \label{el2.10}
		\end{align}
		where $P(q)=\dfrac{f_2f_3^2}{f_1f_6}$.
		\begin{proof}
			The identity \eqref{e4.3} was proved by Toh \cite{dis1}. The identities \eqref{e4.4} and \eqref{e4.5} were obtained by Chern and Hao \cite{dis2}, and Hirschhorn and Sellers \cite{dis3}, respectively. The identities \eqref{e2.5} and \eqref{el2.10} were proved by Hirschhorn and Sellers \cite{hs2010}, while for identity \eqref{el2.9} see {{\cite[Equation 14.3.5]{poq}}}.
		\end{proof}
	\end{lemma}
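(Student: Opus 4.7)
The plan is to establish the six 3-dissections in Lemma \ref{l2.3a} by reducing each left-hand side to Ramanujan theta series, splitting the summation indices modulo 3, and reassembling the pieces as infinite products via Jacobi's triple product. I would order the identities so that the simpler theta-function dissections feed into the more complicated eta-quotient ones.

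First I would handle \eqref{el2.9} and \eqref{e2.5} in parallel. Using $\psi(q) = \sum_{n\ge 0} q^{n(n+1)/2}$ from \eqref{et2}, I split the index by $n \bmod 3$: the class $n\equiv 1\pmod 3$ immediately contributes $q\,\psi(q^9)$, while the classes $n\equiv 0$ and $n\equiv 2$ combine, after the index shift $m\mapsto -m-1$ in one of them, into the bilateral series $\sum_{k\in\mathbb Z} q^{(9k^2+3k)/2} = f(q^3,q^6)$; the Jacobi triple product together with a short product rearrangement identifies this with $P(q^3)$. The same residue-class splitting applied to $\phi(-q) = f_1^2/f_2 = \sum_{n\in\mathbb Z}(-q)^{n^2}$ yields \eqref{e2.5}: the $n\equiv 0\pmod 3$ terms give $f_9^2/f_{18}$, and the $n\equiv\pm 1$ terms pair into $-2q\,f_3 f_{18}^2/(f_6 f_9)$.

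Once \eqref{el2.9} is known, \eqref{el2.10} follows by algebraic rationalisation: setting $A=P(q^3)$ and $B=\psi(q^9)$, one has $1/(A+qB) = (A^2-qAB+q^2B^2)/(A^3+q^3B^3)$, and the denominator $A^3+q^3B^3$ is identified with $\psi(q^3)^4/\psi(q^9)$ by applying \eqref{el2.9} with $q$ replaced by $q^3$ and simplifying. For \eqref{e4.5} I would use the factorisation $f_1 f_2 = \phi(-q)\,\psi(q)$, multiply the two 3-dissections \eqref{e2.5} and \eqref{el2.9}, and collect terms by the power of $q$ modulo 3; the three resulting coefficients simplify to the stated eta-quotients using only the defining product identities for $\psi$ and $P$. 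Then \eqref{e4.4} for $1/(f_1 f_2)$ is extracted from \eqref{e4.5} via the cube-root rationalisation based on $X^3+Y^3+Z^3-3XYZ = (X+Y+Z)(X^2+Y^2+Z^2-XY-YZ-ZX)$ applied to $A+qB+q^2C$ from \eqref{e4.5}.

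The truly delicate step is the dissection \eqref{e4.3} of $f_2^3/f_1^3$, which is the 3-dissection of the generating function underlying the Borwein cubic theta functions. Here I would either route through $a(q), b(q), c(q)$ and invoke $a(q)^3 = b(q)^3 + c(q)^3$ together with their known eta-quotient representations, or proceed more directly by cubing \eqref{el2.9} to obtain the 3-dissection of $\psi(q)^3 = f_2^6/f_1^3$ and then combining it with a separately derived dissection of $f_2^{-3}$. The main obstacle is bookkeeping: tracking the numerical coefficients $1, 3, 6, 12$ and the exponents $0, 1, 2, 3$ of $q$, and showing that the residue-class contributions collapse to exactly the stated four eta-quotient terms without any spurious leftover. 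As a safeguard I would verify both sides of \eqref{e4.3} as formal power series to order $q^{30}$ before declaring the proof complete.
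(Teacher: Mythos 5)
The paper does not actually prove Lemma \ref{l2.3a}: its ``proof'' is a list of citations (Toh for \eqref{e4.3}, Chern--Hao for \eqref{e4.4}, Hirschhorn--Sellers for \eqref{e4.5}, \eqref{e2.5}, \eqref{el2.10}, and Hirschhorn's book for \eqref{el2.9}). Your self-contained derivation is therefore a genuinely different route, and most of it is sound. Splitting the index of $\psi(q)=\sum_{n\ge0}q^{n(n+1)/2}$ and of $\phi(-q)=\sum_{n\in\mathbb{Z}}(-1)^nq^{n^2}$ modulo $3$ and applying the Jacobi triple product does yield \eqref{el2.9} and \eqref{e2.5}; in particular $f(q^3,q^6)=f_6f_9^2/(f_3f_{18})=P(q^3)$ as you claim. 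Multiplying these two dissections through the factorisation $f_1f_2=\phi(-q)\psi(q)$ gives \eqref{e4.5} exactly, the middle coefficient being $1-2=-1$.

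There are, however, two genuine gaps. First, the key denominator identity in your rationalisation of \eqref{el2.10}, namely $P(q^3)^3+q^3\psi(q^9)^3=\psi(q^3)^4/\psi(q^9)$, does \emph{not} follow ``by applying \eqref{el2.9} with $q$ replaced by $q^3$'': that substitution only gives $\psi(q^3)=P(q^9)+q^3\psi(q^{27})$, a statement at the wrong dissection level. The standard fix is to observe that, since $A=P(q^3)$ and $B=\psi(q^9)$ are series in $q^3$, one has $A^3+q^3B^3=\psi(q)\psi(\omega q)\psi(\omega^2q)$ for $\omega$ a primitive cube root of unity, and then to evaluate this product directly using $(q;q)_\infty(\omega q;\omega q)_\infty(\omega^2q;\omega^2q)_\infty=f_3^4/f_9$ and its $q\mapsto q^2$ analogue; this yields $f_6^8f_9/(f_3^4f_{18}^2)=\psi(q^3)^4/\psi(q^9)$ as required. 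The very same device is needed, and is likewise missing, to identify the denominator in your \eqref{e4.4} step: $X^3+Y^3+Z^3-3XYZ=\prod_{j=0}^{2}\bigl[(\omega^jq;\omega^jq)_\infty(\omega^{2j}q^2;\omega^{2j}q^2)_\infty\bigr]=f_3^4f_6^4/(f_9f_{18})$. Second, for \eqref{e4.3}, the hardest identity of the lemma, you offer only two candidate strategies and a numerical check to $O(q^{30})$; a finite-order verification is not a proof, so this identity remains unproved in your write-up (the paper sidesteps it by citing Toh, which is also a legitimate completion of your argument). A dividend of actually carrying out your \eqref{e4.4} computation: the final term comes out as $4q^4f_{18}^9/(f_3^2f_6^6f_9^3)$, not $4q^4f_{18}/(f_3^2f_6^6f_9^3)$ as printed (the printed term has a net count of $1-11=-10$ eta factors, whereas every other term has $-2$), so the paper's \eqref{e4.4} contains a typographical error --- harmless in its later applications, since only the residue classes modulo $3$ of the exponents are ever used.
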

	Employing the Binomial Theorem, we can easily establish the following important congruence, which will be frequently used without explicitly mentioning.
	
	\begin{lemma}\label{bt}
		For any prime $p$ and positive integers $k$ and $m$,
		\begin{align*}
			f_m^{p^k} \equiv f_{mp}^{p^{k-1}} \pmod{p^k}.
		\end{align*}
		\begin{proof}
			See {{\cite[Lemma 3]{bino}}} for a proof.
		\end{proof}
	\end{lemma}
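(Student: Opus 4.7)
The plan is to prove this classical Frobenius-type congruence by straightforward induction on $k$, using the binomial theorem modulo $p$ (often called the Freshman's dream) as the workhorse. All arithmetic takes place in $\mathbb{Z}[[q]]/p^k\mathbb{Z}[[q]]$, using the product representation $f_m = \prod_{n\ge 1}(1 - q^{mn})$.

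For the base case $k = 1$, the Freshman's dream gives $(1 - q^{mn})^p \equiv 1 - q^{mnp} \pmod p$ for each $n \ge 1$, because the inner binomial coefficients $\binom{p}{i}$ are divisible by $p$ for $1 \le i \le p-1$. Taking the product over $n$ yields
$$f_m^p \equiv \prod_{n\ge 1}(1 - q^{mnp}) = f_{mp} \pmod p,$$
which is the case $k=1$.

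For the inductive step, fix $k \ge 2$ and assume $f_m^{p^{k-1}} \equiv f_{mp}^{p^{k-2}} \pmod{p^{k-1}}$. Write $f_m^{p^{k-1}} = f_{mp}^{p^{k-2}} + p^{k-1} A$ for some $A \in \mathbb{Z}[[q]]$, raise both sides to the $p$-th power, and apply the ordinary binomial theorem:
$$f_m^{p^k} = \sum_{i=0}^{p} \binom{p}{i}\, f_{mp}^{p^{k-2}(p-i)} \bigl(p^{k-1} A\bigr)^i.$$
The $i = 0$ term is $f_{mp}^{p^{k-1}}$, exactly the target. The $i = 1$ term carries a factor of $\binom{p}{1}\cdot p^{k-1} = p^k$, hence is $0 \pmod{p^k}$. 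Each $i \ge 2$ term carries a factor of $p^{i(k-1)}$, and the inequality $i(k-1) \ge 2(k-1) \ge k$ (which holds precisely because $k \ge 2$) shows these are also $0 \pmod{p^k}$. Combining, $f_m^{p^k} \equiv f_{mp}^{p^{k-1}} \pmod{p^k}$.

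The only real obstacle is the $p$-adic bookkeeping at the inductive step, specifically verifying the inequality $i(k-1) \ge k$ for all $i \ge 2$ and $k \ge 2$; this reduces to $k \ge 2$ and is immediate once noted. Everything else is mechanical, which is why the paper only cites \cite{bino} for the proof. No special properties of the product $f_m$ beyond its expansion as a power series with integer coefficients are needed, so the same argument actually yields $F^{p^k} \equiv (F|_{q \mapsto q^p})^{p^{k-1}} \pmod{p^k}$ for any $F \in \mathbb{Z}[[q]]$ with constant term $\pm 1$.
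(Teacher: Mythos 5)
Your proof is correct and complete. The paper itself offers no argument for this lemma—it simply cites \cite[Lemma 3]{bino}—and the proof you give is precisely the standard one found there: the Frobenius congruence $(1-x)^p \equiv 1-x^p \pmod p$ applied factor by factor for the base case, then induction on $k$ by expanding $\bigl(f_{mp}^{p^{k-2}} + p^{k-1}A\bigr)^p$ binomially, with the $i=1$ term killed by $\binom{p}{1}p^{k-1}=p^k$ and the $i\ge 2$ terms killed by $i(k-1)\ge k$. Your closing observation is also sound; in fact the congruence $F(q)^p \equiv F(q^p) \pmod p$ holds for every $F \in \mathbb{Z}[[q]]$, so even the hypothesis on the constant term can be dropped.
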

	\noindent We close this section with key machinery in the following theorem.
	\begin{lemma}\label{l2.5}
		For all  primes $p$, and positive integers $\alpha_i, \beta_i, \gamma_i, \delta_i, a_i, b_i, \text{and} \;\lambda$, define \\
		\begin{align*}
			\displaystyle{\sum_{n=0}^{\infty}A(n)q^n:=\dfrac{\prod_{i=1}^{j}f_{\alpha_i}^{\gamma_i}}{\prod_{i=1}^{k}f_{\beta_i}^{\delta_i}}}
			\quad	\text{and} \quad \displaystyle{\sum_{n=0}^{\infty}B(n)q^n:=\dfrac{\prod_{i=1}^{l}f_{\alpha_i}^{a_ip^\lambda+\gamma_i}}{\prod_{i=1}^{m}f_{\beta_i}^{b_ip^\lambda+\delta_i}}}. \\
		\end{align*}		
For $1\le C\le p^\lambda -1$, if $A(p^\lambda n+C)\equiv 0\pmod p$ for all $n\geq 0$, then $B(p^\lambda n+C)\equiv 0\pmod p$ for all $n\geq 0$.
		\begin{proof}
			We have 
			\begin{align*}
				\sum_{n=0}^{\infty}B(n)q^n=\dfrac{\prod_{i=1}^{l}f_{\alpha_i}^{a_ip^\lambda}}{\prod_{i=1}^{m}f_{\beta_i}^{b_ip^\lambda}}\cdot\dfrac{\prod_{i=1}^{j}f_{\alpha_i}^{\gamma_i}}{\prod_{i=1}^{k}f_{\beta_i}^{\delta_i}}.
			\end{align*}
			We use Lemma \ref{bt} to obtain
			\begin{align*}
				\sum_{n=0}^{\infty}B(n)q^n\equiv\dfrac{\prod_{i=1}^{l}f_{p^\lambda\alpha_i}^{a_i}}{\prod_{i=1}^{m}f_{p^\lambda\beta_i}^{b_i}}\sum_{n=0}^{\infty}A(n)q^n\pmod p.
			\end{align*}
			Extracting the terms in which the exponents of $q$ are of the form $p^\lambda n+C$, we get
			\begin{align*}
				\sum_{n=0}^{\infty}B(p^\lambda n+C)q^{p^\lambda n +C}\equiv\dfrac{\prod_{i=1}^{l}f_{p^\lambda \alpha_i}^{a_i}}{\prod_{i=1}^{m}f_{p^\lambda\beta_i}^{b_i}}\sum_{n=0}^{\infty}A(p^\lambda n+C)q^{p^\lambda n +C}\pmod p.
			\end{align*}
			Dividing by $q^C$ and replacing $q^{p^\lambda}$ by $q$ yields
			\begin{align}\label{e2.9}
				\sum_{n=0}^{\infty}B(p^\lambda n+C)q^{n}\equiv\dfrac{\prod_{i=1}^{l}f_{p^\lambda\alpha_i}^{a_i}}{\prod_{i=1}^{m}f_{p^\lambda\beta_i}^{b_i}}\sum_{n=0}^{\infty}A(p^\lambda n+C)q^{n}\pmod p.
			\end{align}
A straightforward interpretation of the factor $\dfrac{\prod_{i=1}^{l}f_{p^\lambda\alpha_i}^{a_i}}{\prod_{i=1}^{m}f_{p^\lambda\beta_i}^{b_i}}$ implies it is a function of $q^{p^\lambda}$. Invoking $A(p^\lambda n+C)\equiv 0\pmod p$ in \eqref{e2.9}, the proof of Lemma \ref{l2.5} is completed.
		\end{proof}
	\end{lemma}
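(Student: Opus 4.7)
The strategy is to split the generating function for $B(n)$ into the product of (i) a factor consisting entirely of $p^\lambda$-th powers of the $f_m$'s and (ii) the generating function for $A(n)$; reduce the first factor modulo $p$ via Lemma \ref{bt} so that it becomes a formal power series in $q^{p^\lambda}$; and then extract the subseries of exponents $\equiv C \pmod{p^\lambda}$ to transfer the vanishing from $A$ to $B$.

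First I would write, directly from the definitions,
$$\sum_{n\ge 0} B(n)q^n \;=\; \frac{\prod_{i} f_{\alpha_i}^{a_i p^\lambda}}{\prod_{i} f_{\beta_i}^{b_i p^\lambda}} \cdot \frac{\prod_{i} f_{\alpha_i}^{\gamma_i}}{\prod_{i} f_{\beta_i}^{\delta_i}} \;=\; \frac{\prod_{i} f_{\alpha_i}^{a_i p^\lambda}}{\prod_{i} f_{\beta_i}^{b_i p^\lambda}} \cdot \sum_{n\ge 0} A(n)q^n.$$
Next I would iterate the elementary congruence $f_x^p \equiv f_{xp}\pmod p$ (the $k=1$ case of Lemma \ref{bt}) to deduce $f_x^{p^\lambda} \equiv f_{xp^\lambda}\pmod p$, which applied termwise gives
$$\sum_{n\ge 0} B(n)q^n \;\equiv\; \frac{\prod_{i} f_{p^\lambda\alpha_i}^{a_i}}{\prod_{i} f_{p^\lambda\beta_i}^{b_i}} \cdot \sum_{n\ge 0} A(n)q^n \pmod p.$$
The crucial observation is that every subscript on the prefactor is a multiple of $p^\lambda$, so that prefactor is a formal power series in $q^{p^\lambda}$ and contributes only exponents divisible by $p^\lambda$.

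Then I would extract the subseries supported on exponents $\equiv C \pmod{p^\lambda}$. Because the prefactor shifts exponents only by multiples of $p^\lambda$, the coefficient of $q^{p^\lambda n + C}$ on the right-hand side is a $\mathbb{Z}$-linear combination of coefficients $A(p^\lambda n' + C)$ with $n'\le n$. Under the hypothesis that $A(p^\lambda n + C) \equiv 0 \pmod p$ for every $n\ge 0$, every such coefficient vanishes modulo $p$, and hence $B(p^\lambda n + C) \equiv 0 \pmod p$. The hypothesis $1\le C\le p^\lambda-1$ plays no arithmetic role in the argument itself; it is needed only to rule out the trivial case $C=0$, where the constant-term contribution of the prefactor could interact nontrivially with $A(0)$.

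The only mild obstacle I anticipate is bookkeeping: making sure the index sets $(j,k)$ appearing in the definition of $A$ are matched consistently with the index sets $(\ell,m)$ appearing in the definition of $B$, so that the exponent $a_i p^\lambda$ is attached to the same base $f_{\alpha_i}$ as the original exponent $\gamma_i$ (and similarly for the $\beta_i$'s). Once that notation is aligned, no $q$-series identity beyond the binomial congruence $f_x^p \equiv f_{xp}\pmod p$ is required, and the entire proof reduces to the two-line manipulation above followed by the standard arithmetic-progression extraction.
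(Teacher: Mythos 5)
Your proposal is correct and follows essentially the same route as the paper's proof: factor the generating function of $B$ as a pure $p^\lambda$-power prefactor times the generating function of $A$, reduce the prefactor via Lemma \ref{bt} to a series in $q^{p^\lambda}$, and extract the arithmetic progression $p^\lambda n + C$ to transfer the vanishing from $A$ to $B$. No substantive differences to report.
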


\section{Alternate proofs for (\ref{e1.6}) and (\ref{e1.5})}
\label{s3}
	In this section, with the tools mentioned above, we provide an alternate proof for congruences \eqref{e1.6} and \eqref{e1.5}. All congruences in this section hold modulo 3.
	\begin{proof}[\textbf{Proof of (\ref{e1.6})}]
		Thanks to \eqref{e1.4}, we have
		\begin{align*}
			\sum_{n=0}^{\infty}a_{1,5}(n)q^n=\dfrac{f_2^4}{f_1^5}=\dfrac{f_2^6}{f_1^6}\dfrac{f_1}{f_2^2}\equiv \dfrac{f_6^2}{f_3^2}\dfrac{1}{\psi(q)}.
		\end{align*}
		From \eqref{el2.10}, we have
		\begin{align*}
			\sum_{n=0}^{\infty}a_{1,5}(3n+1)q^{3n+1}\equiv \dfrac{f_6^2}{f_3^2}\dfrac{\psi(q^9)}{\psi(q^3)^4}\left(-qP(q^3)\psi(q^9)\right).
		\end{align*}
		This implies,
		\begin{align}
			\sum_{n=0}^{\infty}a_{1,5}(3n+1)q^{n}&\equiv -\dfrac{f_2^2}{f_1^2}\dfrac{\psi(q^3)^2}{\psi(q)^4}P(q)\nonumber\\
			&\equiv -\dfrac{\psi(q^3)}{\psi(q)}\dfrac{f_2^2}{f_1^2}\dfrac{f_2f_3^2}{f_1f_6}\nonumber\\
			&=-\psi(q^3)\dfrac{f_2^3f_3^2}{f_1^3f_6}\dfrac{1}{\psi(q)}\nonumber\\
			&\equiv -\psi(q^3)f_3\dfrac{1}{\psi(q)}\label{e3.1}.
		\end{align}
		Again, applying \eqref{el2.10} on \eqref{e3.1}, and then extracting those terms in which the power of $q$ is congruent to 0 modulo 3, and replacing $q^3$ by $q$, we obtain
		\begin{align*}
			\sum_{n=0}^{\infty}a_{1,5}(9n+1)q^{n}&\equiv-f_1\dfrac{\psi(q^3)}{\psi(q)^3}P(q)^2\\
			&\equiv-f_1 P(q)^2\\
			&=-f_1\displaystyle\left(\dfrac{f_2f_3^2}{f_1f_6}\right)^2\\
			&=-\dfrac{f_3^4}{f_6^2}\psi(q).
		\end{align*}
		From \eqref{el2.9}, we have			
		\begin{equation}
                \label{a15_91_diss}
			\sum_{n=0}^{\infty}a_{1,5}(9n+1)q^{n}\equiv-\dfrac{f_3^4}{f_6^2}\;[P(q^3)+q\psi(q^9)].
		\end{equation}
		To complete the proof, it suffices to note that	there are no terms of the form $q^{3n+2}$ in this last congruence. 
	\end{proof}
	\begin{corollary}
		For all $n\ge0$, and all $k\ge j\geq 0$,
		\begin{align*}
			a_{27(k-j)+1,\; 27k+5}(27n+19)\equiv 0\pmod 3.
		\end{align*}
		\begin{proof}
			For all $n\ge0$, and all $k\ge j\geq 0$, we define 
			\begin{align*}
				A(n):&=a_{1,5}(n)	\\
				B(n):&=a_{27(k-j)+1,\;27k+5}(n).
			\end{align*} Employing Lemma \ref{l2.5}, with $(p,\lambda)=(3,3)$ and $C=19$, we complete the proof.
		\end{proof}
	\end{corollary}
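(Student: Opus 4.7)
The plan is to obtain this corollary as a one-line application of the key machinery encoded in Lemma \ref{l2.5}, with the freshly proved congruence \eqref{e1.6} playing the role of the seed. Since the $r=1$ specialization of the generating function \eqref{e1.4} recovers $f_2^{s-1}/f_1^s$, we have $a_{1,s}(n) = a_s(n)$; in particular, \eqref{e1.6} may be rewritten as $a_{1,5}(27n+19) \equiv 0 \pmod 3$ for all $n\ge 0$. This is precisely the hypothesis ``$A(p^\lambda n + C) \equiv 0 \pmod p$'' needed to invoke Lemma \ref{l2.5}.

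Next, I would compare the two relevant generating functions:
\begin{align*}
\sum_{n\ge 0} a_{1,5}(n)\,q^n &= \frac{f_2^4}{f_1^5}, \\
\sum_{n\ge 0} a_{27(k-j)+1,\,27k+5}(n)\,q^n &= \frac{f_2^{27j+4}}{f_1^{27k+5}}.
\end{align*}
The exponents in the second line differ from those in the first by $27j$ in the numerator and $27k$ in the denominator---that is, by nonnegative multiples of $p^\lambda = 3^3 = 27$. Setting $A(n) := a_{1,5}(n)$, $B(n) := a_{27(k-j)+1,\,27k+5}(n)$, $(p,\lambda)=(3,3)$, $a_1 = j$, $b_1 = k$, and $C = 19$, the two generating functions match the template of Lemma \ref{l2.5} verbatim, so the conclusion $B(27n+19) \equiv 0 \pmod 3$ is exactly the claimed congruence.

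I do not foresee any real obstacle, since the substantive $3$-dissection work was already carried out in establishing \eqref{e1.6}. The only housekeeping is to observe that the hypothesis $k \ge j \ge 0$ keeps both $27j$ and $27k$ nonnegative, so that the form required by Lemma \ref{l2.5} is genuinely met, and that $27(k-j)+1 \ge 1$ and $27k+5 \ge 5$ are valid color-parameter values for the family $a_{r,s}$.
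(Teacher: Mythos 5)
Your proposal is correct and follows exactly the paper's own route: seed the congruence with $a_{1,5}(27n+19)\equiv 0\pmod 3$ from \eqref{e1.6} (proved just before the corollary), then apply Lemma \ref{l2.5} with $A(n)=a_{1,5}(n)$, $B(n)=a_{27(k-j)+1,\,27k+5}(n)$, $(p,\lambda)=(3,3)$, and $C=19$. Your explicit verification that the exponent shifts $27j$ and $27k$ are nonnegative multiples of $p^\lambda$ is a welcome bit of extra care, but the argument is the same.
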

	
	\begin{proof}[\textbf{Proof of (\ref{e1.5})}]

From \eqref{a15_91_diss}, we have 
$$\sum_{n=0}^{\infty}a_{1,5}(9n+1)q^n\equiv -\dfrac{f_3^4}{f_6^2}[P(q^3)+q\psi(q^9)].$$
		By further extracting the terms in which the exponents of $q$ are of the form $3n+1$, we get
		\begin{align*}
			\sum_{n=0}^{\infty}a_{1,5}(27n+10)q^{3n+1}&\equiv -\dfrac{f_3^4}{f_6^2}q\psi(q^9),
           \end{align*}
or 
\begin{align*}
\sum_{n=0}^{\infty}a_{1,5}(27n+10)q^{n}
&\equiv 
-\dfrac{f_1^4}{f_2^2}\psi(q^3) \\
&= 
-f_1^3\dfrac{f_1}{f_2^2}\psi(q^3) \\
&\equiv 
-\psi(q^3)f_3\frac{1}{\psi(q)} \\
&\equiv 
\sum_{n=0}^{\infty}a_{1,5}(3n+1)q^{n}.
\end{align*}
Thanks to (\ref{e3.1}).  
		This completes the proof.
	\end{proof}
	
	\begin{corollary}\label{c3.2}
		For all $n\ge0$, and all $k\ge j\geq 0$,
		\begin{align*}
			a_{27(k-j)+1,\; 27k+5}(27n+10)\equiv	a_{3(k-j)+1,\;3k+5}(3n+1)\pmod 3.
		\end{align*}
	\end{corollary}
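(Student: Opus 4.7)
The plan is to observe that the proof of \eqref{e1.5} actually established the power-series congruence
\[
\sum_{n\ge 0} a_{1,5}(27n+10)\, q^n \;\equiv\; \sum_{n\ge 0} a_{1,5}(3n+1)\, q^n \pmod 3,
\]
not merely a termwise vanishing. This is exactly the sort of input to which the machinery of Lemma \ref{l2.5} applies, since multiplying both sides by a common power-series factor yields a new congruence relating two other colored-partition generating functions. The extension to the corollary therefore amounts to running the extraction argument of Lemma \ref{l2.5} twice, with the same additional factor appearing on each side.

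Explicitly, I would first write $\sum_n a_{27(k-j)+1,\,27k+5}(n)\, q^n = \dfrac{f_2^{27j+4}}{f_1^{27k+5}} = \dfrac{f_2^{27j}}{f_1^{27k}} \cdot \dfrac{f_2^4}{f_1^5}$, and use Lemma \ref{bt} to note that $\dfrac{f_2^{27j}}{f_1^{27k}} \equiv \dfrac{f_{54}^j}{f_{27}^k} \pmod 3$, which is a function of $q^{27}$. Extracting coefficients whose $q$-exponents are of the form $27n+10$ and substituting $q^{27}\to q$ (the argument from the proof of Lemma \ref{l2.5} with $(p,\lambda)=(3,3)$ and $C=10$) yields
\begin{align*}
\sum_{n\ge 0} a_{27(k-j)+1,\,27k+5}(27n+10)\, q^n &\;\equiv\; \frac{f_2^j}{f_1^k}\, \sum_{n\ge 0} a_{1,5}(27n+10)\, q^n \pmod 3.
\end{align*}
Carrying out the analogous procedure on the other side, starting from $\dfrac{f_2^{3j+4}}{f_1^{3k+5}} = \dfrac{f_2^{3j}}{f_1^{3k}}\cdot \dfrac{f_2^4}{f_1^5}$ with $\dfrac{f_2^{3j}}{f_1^{3k}} \equiv \dfrac{f_6^j}{f_3^k} \pmod 3$, and applying Lemma \ref{l2.5} with $(p,\lambda)=(3,1)$ and $C=1$, gives
\begin{align*}
\sum_{n\ge 0} a_{3(k-j)+1,\,3k+5}(3n+1)\, q^n &\;\equiv\; \frac{f_2^j}{f_1^k}\, \sum_{n\ge 0} a_{1,5}(3n+1)\, q^n \pmod 3.
\end{align*}

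Comparing these two displayed congruences via the base power-series identity from the proof of \eqref{e1.5} immediately delivers the claimed termwise congruence. There is no real obstacle here; the argument is essentially bookkeeping. The one point worth verifying is that the multiplier $f_2^j/f_1^k$ emerges identically on both sides after the dissections, which is a direct consequence of the fact that in each case the factored-out piece is, modulo $3$, a function of the appropriate $q^{3^\lambda}$ that collapses to $f_2^j/f_1^k$ after the substitution $q^{3^\lambda}\to q$.
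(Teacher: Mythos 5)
Your proposal is correct and follows essentially the same route as the paper: factor out $f_2^{27j}/f_1^{27k}$ (resp.\ $f_2^{3j}/f_1^{3k}$), reduce it modulo $3$ to a series in $q^{27}$ (resp.\ $q^3$) via Lemma \ref{bt}, extract the relevant arithmetic progression so that both sides acquire the common multiplier $f_2^j/f_1^k$, and then invoke the base congruence \eqref{e1.5}. The only cosmetic difference is that you cite the extraction mechanism of Lemma \ref{l2.5} explicitly, whereas the paper carries out the same extraction directly in the proof of Corollary \ref{c3.2}.
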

	\begin{proof}
		We have
		\begin{align*}
			\sum_{n=0}^{\infty}a_{27(k-j)+1,\;27k+5}(n)q^n&=\dfrac{f_2^{27j+4}}{f_1^{27k+5}}\\
			&\equiv \dfrac{f_{54}^j}{f_{27}^k} \sum_{n=0}^{\infty}a_{1,5}(n)q^n.
		\end{align*}
		This implies,
		\begin{align*}
			\sum_{n=0}^{\infty}a_{27(k-j)+1,\;27k+5}(27n+10)q^n\equiv \dfrac{f_2^j}{f_1^k}\sum_{n=0}^{\infty}a_{1,5}(27n+10)q^n .
		\end{align*}
		From \eqref{e1.5}, we have
		\begin{align*}
			\sum_{n=0}^{\infty}a_{27(k-j)+1,\;27k+5}(27n+10)q^n\equiv \dfrac{f_2^j}{f_1^k}\sum_{n=0}^{\infty}a_{1,5}(3n+1)q^n .
		\end{align*}
		Now,
		\begin{align*}
			\sum_{n=0}^{\infty}a_{3(k-j)+1,\;3k+5}(n)q^n&=\dfrac{f_2^{3j+4}}{f_1^{3k+5}}\\
			&\equiv \dfrac{f_{6}^j}{f_{3}^k} \sum_{n=0}^{\infty}a_{1,5}(n)q^n.
		\end{align*}
		This implies,
		\begin{align*}
			\sum_{n=0}^{\infty}a_{3(k-j)+1,\;3k+5}(3n+1)q^n\equiv \dfrac{f_2^j}{f_1^k}\sum_{n=0}^{\infty}a_{1,5}(3n+1)q^n.
		\end{align*}
		Therefore, for all $n\geq 0$, 
		\begin{align*}
			a_{27(k-j)+1,\; 27k+5}(27n+10)\equiv	a_{3(k-j)+1,\;3k+5}(3n+1)\pmod3.
		\end{align*}
		This completes the proof of Corollary \ref{c3.2}.
	\end{proof}
	\begin{remark}
		Applying induction on \eqref{e1.6} and \eqref{e1.5}, we obtain Theorem \ref{t1.2}.
	\end{remark}

\section{Proofs of Theorems \ref{t1.6} -- \ref{t1.12}}\label{s4}
	In this section, all congruences hold modulo 3.
	\begin{proof}[\textbf{Proof of Theorem \ref{t1.6}}]
		From \eqref{e1.4}, we have
		\begin{align*}
			\sum_{n=0}^{\infty}a_{3,6}(n)q^n&=\dfrac{f_2^3}{f_1^6}\\
			&\equiv\dfrac{f_6}{f_3^2}.
		\end{align*} 
		It is easy to see the last expression is a function of $q^3$.  Therefore, for all $n\geq 0$, we obtain
		\begin{align*}
			a_{3,6}(3n+1)&\equiv 0 \pmod3 \\
			a_{3,6}(3n+2)&\equiv 0 \pmod3.
		\end{align*}
		Now, for all $n\ge0$, and all $k\ge j\geq 0$, we define 
		\begin{align*}
			A(n):&=a_{3,6}(n)	\\
			B(n):&=a_{3(k-j)+3,\;3k+6}(n).
		\end{align*}
		On employing Lemma \ref{l2.5}, with $(p,\lambda)=(3,1)$ and $C=1$ and $2$, we establish Theorem \ref{t1.6}.
	\end{proof}

	\begin{proof}[\textbf{Proof of Theorem \ref{t1.8}}]
		Thanks to \eqref{e1.4} and \eqref{el2.9}, we have
		\begin{align*}
			\sum_{n=0}^{\infty} a_{2,4}(n)q^n&= \dfrac{f_2^2}{f_1^4}\\
			&\equiv\dfrac{1}{f_3}\psi(q)\\		
			&\equiv\dfrac{1}{f_3}\displaystyle\left(\dfrac{f_6f_9^2}{f_3f_{18}}+q\dfrac{f_{18}^2}{f_9} \right).
		\end{align*} 
		Extracting the terms in which the exponents of $q$ are of the form $3n$ and replacing $q^3$ by $q$, we get
		\begin{align}
			\sum_{n=0}^{\infty} a_{2,4}(3n)q^{n}&\equiv \dfrac{f_2f_3^2}{f_1^2f_6} \label{e4.1}\\ 
			&=\dfrac{f_3^2}{f_6}\sum_{n=0}^{\infty}\bar{p}(n)q^n, \nonumber
		\end{align}
		where $\bar{p}(n)$ enumerates the number of overpartitions of $n$ \cite{love}.
		Also from {{\cite[proof of Theorem 2.1]{hs2005}}},
		\begin{align*}
			\sum_{n=0}^{\infty} a_{2,4}(9n)q^{3n}&\equiv \dfrac{f_3^2}{f_6}\sum_{n=0}^{\infty}\bar{p}(3n)q^{3n}\\
			&\equiv \dfrac{f_9^4}{f_{18}^2}.
		\end{align*}
		Thus, \begin{align}\label{e4.6a}
			\sum_{n=0}^{\infty} a_{2,4}(9n)q^{n}\equiv\dfrac{f_3^4}{f_6^2}.
		\end{align}
		Observe that $\dfrac{f_3^4}{f_6^2}$ is a function of $q^3$. Therefore, for all $n\geq 0$, we must have 
		\begin{align}
			a_{2,4}(9(3n+1))&=a_{2,4}(27n+9)\equiv 0 \label{e41.3}\\
			a_{2,4}(9(3n+2))&=a_{2,4}(27n+18)\equiv0 \label{e41.4}.
		\end{align}
		Now, for all $n\ge0$, and all $k\ge j\geq 0$, we define 
		\begin{align*}
			A(n):&=a_{2,4}(n)	\\
			B(n):&=a_{27(k-j)+2,\;27k+4}(n).
		\end{align*} Employing Lemma \ref{l2.5}, with $(p,\lambda)=(3,3)$ and $C=9$ and $18$, we  establish Theorem \ref{t1.8}.
	\end{proof}

	\begin{proof}[\textbf{Proof of Theorem \ref{t1.9}}]
		Thanks to \eqref{e4.1}, we have
		\begin{align*}
			\sum_{n=0}^{\infty}a_{2,4}(3n)q^n\equiv\dfrac{f_1^4}{f_2^2}.
		\end{align*}
		Now, in \eqref{e4.6a}, we extract the terms in which the exponents of $q$ are of the form $3n$, to obtain
		\begin{align*}
			\sum_{n=0}^{\infty}a_{2,4}(27n)q^n&\equiv\dfrac{f_1^4}{f_2^2}.
		\end{align*}
		Theorem \ref{t1.9} follows immediately.
	\end{proof}
	
	\begin{corollary}\label{c4.1}
		For all $n\ge0$, and all $k\ge j\geq 0$,
		\begin{align*}
			a_{27(k-j)+2,\; 27k+4}(27n)\equiv	a_{3(k-j)+2,\;3k+4}(3n).
		\end{align*}
	\end{corollary}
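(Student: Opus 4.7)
The plan is to mirror the proof of Corollary \ref{c3.2} essentially verbatim, with Theorem \ref{t1.9} playing the role that (\ref{e1.5}) played there. The key ingredients are the generating function identity \eqref{e1.4}, Lemma \ref{bt} applied with $(p,\lambda)=(3,3)$ on the left-hand side and $(p,\lambda)=(3,1)$ on the right-hand side, and Theorem \ref{t1.9}.

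First, I would write
\begin{align*}
\sum_{n=0}^{\infty} a_{27(k-j)+2,\,27k+4}(n)\,q^n \;=\; \dfrac{f_2^{\,27j+2}}{f_1^{\,27k+4}} \;\equiv\; \dfrac{f_{54}^{\,j}}{f_{27}^{\,k}}\cdot \dfrac{f_2^{\,2}}{f_1^{\,4}} \;=\; \dfrac{f_{54}^{\,j}}{f_{27}^{\,k}}\sum_{n=0}^{\infty} a_{2,4}(n)\,q^n \pmod{3},
\end{align*}
using $f_m^{27}\equiv f_{27m}\pmod 3$ (three iterations of Lemma \ref{bt}). Since $f_{54}^{\,j}/f_{27}^{\,k}$ is a function of $q^{27}$, I can extract the $q^{27n}$ terms and then replace $q^{27}$ by $q$ to get
\begin{align*}
\sum_{n=0}^{\infty} a_{27(k-j)+2,\,27k+4}(27n)\,q^n \;\equiv\; \dfrac{f_2^{\,j}}{f_1^{\,k}}\sum_{n=0}^{\infty} a_{2,4}(27n)\,q^n \pmod{3}.
\end{align*}
Applying Theorem \ref{t1.9} replaces $a_{2,4}(27n)$ by $a_{2,4}(3n)$ on the right-hand side.

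In parallel, for $a_{3(k-j)+2,\,3k+4}$, the same manipulation with $f_m^{3}\equiv f_{3m}\pmod 3$ gives
\begin{align*}
\sum_{n=0}^{\infty} a_{3(k-j)+2,\,3k+4}(n)\,q^n \;\equiv\; \dfrac{f_6^{\,j}}{f_3^{\,k}}\sum_{n=0}^{\infty} a_{2,4}(n)\,q^n \pmod{3},
\end{align*}
and extracting $q^{3n}$ (again possible since $f_6^{\,j}/f_3^{\,k}$ is a function of $q^3$) followed by $q^3\mapsto q$ yields
\begin{align*}
\sum_{n=0}^{\infty} a_{3(k-j)+2,\,3k+4}(3n)\,q^n \;\equiv\; \dfrac{f_2^{\,j}}{f_1^{\,k}}\sum_{n=0}^{\infty} a_{2,4}(3n)\,q^n \pmod{3}.
\end{align*}
The right-hand sides now agree, completing the proof by comparison of coefficients.

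There is really no obstacle here; the only thing to be careful about is ensuring that the extraneous factors $f_{54}^{\,j}/f_{27}^{\,k}$ and $f_6^{\,j}/f_3^{\,k}$ are genuinely functions of $q^{27}$ and $q^3$ respectively, so that the extraction-and-substitution step is legitimate. This is immediate from the definition $f_m=(q^m;q^m)_\infty$. In other words, this corollary is the exact analogue of Corollary \ref{c3.2} where the base congruence (\ref{e1.5}) is replaced by Theorem \ref{t1.9}.
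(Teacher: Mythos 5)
Your proposal is correct and follows essentially the same route as the paper: factor out $f_{54}^{\,j}/f_{27}^{\,k}$ (resp. $f_6^{\,j}/f_3^{\,k}$) via the binomial-theorem congruence, extract the relevant arithmetic progression, and invoke Theorem \ref{t1.9} to match the two right-hand sides. No discrepancies to report.
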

	\begin{proof}
		We have
		\begin{align*}
			\sum_{n=0}^{\infty}a_{27(k-j)+2,\;27k+4}(n)q^n&=\dfrac{f_2^{27j+2}}{f_1^{27k+4}}\\
			&\equiv \dfrac{f_{54}^j}{f_{27}^k} \sum_{n=0}^{\infty}a_{2,4}(n)q^n ,
		\end{align*}
		which implies
		\begin{align*}
			\sum_{n=0}^{\infty}a_{27(k-j)+2,\;27k+4}(27n)q^n\equiv \dfrac{f_2^j}{f_1^k}\sum_{n=0}^{\infty}a_{2,4}(27n)q^n.
		\end{align*}
		From Theorem \ref{t1.9}, we have
		\begin{align*}
			\sum_{n=0}^{\infty}a_{27(k-j)+2,\;27k+4}(27n)q^n\equiv \dfrac{f_2^j}{f_1^k}\sum_{n=0}^{\infty}a_{2,4}(3n)q^n.
		\end{align*}
		Again, we have
		\begin{align*}
			\sum_{n=0}^{\infty}a_{3(k-j)+2,\;3k+4}(n)q^n&=\dfrac{f_2^{3j+2}}{f_1^{3k+4}}\\
			&\equiv \dfrac{f_{6}^j}{f_{3}^k} \sum_{n=0}^{\infty}a_{2,4}(n)q^n.
		\end{align*}
		Extracting the terms in which the exponents of $q$ are of the form $3n$, we obtain
		\begin{align*}
			\sum_{n=0}^{\infty}a_{3(k-j)+2,\;3k+4}(3n)q^n\equiv \dfrac{f_2^j}{f_1^k}\sum_{n=0}^{\infty}a_{2,4}(3n)q^n .
		\end{align*}
		Thanks to Theorem \ref{t1.9}, this completes the proof of Corollary \ref{c4.1}.
	\end{proof}

	\begin{corollary}\label{c4.2}
		For all $n\ge0$, and all $k\ge j\geq 0$,
		\begin{align*}
			a_{2,4}(3^{2\alpha+3}n+3^{2(\alpha+1)})\equiv 0 \pmod 3\\
			a_{2,4}(3^{2\alpha+3}n+2\cdot3^{2(\alpha+1)})\equiv 0 \pmod3.
		\end{align*}
	\end{corollary}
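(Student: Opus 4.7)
The plan is to induct on $\alpha\ge 0$, using the self-replicating congruence of Theorem \ref{t1.9} for the inductive step and the already-proved congruences \eqref{e41.3}--\eqref{e41.4} as the base case.

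For $\alpha=0$, the two claims reduce to $a_{2,4}(27n+9)\equiv 0 \pmod 3$ and $a_{2,4}(27n+18)\equiv 0 \pmod 3$, which are precisely \eqref{e41.3} and \eqref{e41.4} extracted in the proof of Theorem \ref{t1.8} from the fact that $\sum a_{2,4}(9n)q^n\equiv f_3^4/f_6^2\pmod 3$ is a function of $q^3$ alone.

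For the inductive step, assume the two congruences hold at level $\alpha$. I would factor
\[
3^{2\alpha+5}n+3^{2(\alpha+2)} \;=\; 27\bigl(3^{2\alpha+1}(3n+1)\bigr),
\qquad
3^{2\alpha+5}n+2\cdot 3^{2(\alpha+2)} \;=\; 27\bigl(3^{2\alpha+1}(3n+2)\bigr),
\]
which is legitimate because $2\alpha+4\ge 3$ for every $\alpha\ge 0$. Setting $N$ to either quantity in parentheses and invoking Theorem \ref{t1.9} (applied to $N$ in place of $n$) gives
\[
a_{2,4}(27N)\equiv a_{2,4}(3N)\pmod 3.
\]
But $3N$ equals $3^{2\alpha+3}n+3^{2(\alpha+1)}$ in the first case and $3^{2\alpha+3}n+2\cdot 3^{2(\alpha+1)}$ in the second, so the right-hand side vanishes modulo $3$ by the inductive hypothesis, advancing the induction to level $\alpha+1$.

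No substantive obstacle is expected here: the only care needed is verifying the exponent bookkeeping so that a factor of $27$ can always be extracted from the argument, which is immediate from $2\alpha+4\ge 3$. In this sense Corollary \ref{c4.2} is a clean bootstrapping consequence of the two independent facts already secured for Theorem \ref{t1.8} and Theorem \ref{t1.9}, namely the annihilation $a_{2,4}(27n+9)\equiv a_{2,4}(27n+18)\equiv 0\pmod 3$ and the internal congruence $a_{2,4}(27n)\equiv a_{2,4}(3n)\pmod 3$.
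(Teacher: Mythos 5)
Your proposal is correct and is precisely the argument the paper intends: the paper's proof consists of the single sentence ``Employing induction on \eqref{e41.3} and \eqref{e41.4}, along with Theorem \ref{t1.9}, we complete the proof,'' and you have carried out that induction with the right base case and the correct factorization $3^{2\alpha+5}n+c\cdot 3^{2(\alpha+2)}=27\cdot 3^{2\alpha+1}(3n+c)$ for $c\in\{1,2\}$ in the inductive step. Nothing further is needed.
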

	\begin{proof}
		Employing induction on \eqref{e41.3} and \eqref{e41.4}, along with Theorem \ref{t1.9}, we complete the proof of  Corollary \ref{c4.2}.
	\end{proof}
	
	\begin{proof}[\textbf{Proof of Theorem \ref{t1.10}}]
		From \eqref{e1.4}, we have
		\begin{align*}
			\sum_{n=0}^{\infty}a_{4,2}(n)q^n&=\dfrac{1}{f_1^2f_2^2}\equiv \dfrac{f_1f_2}{f_3f_6}.
		\end{align*} 
		Using \eqref{e4.5}, we have
		\begin{align*}
			\sum_{n=0}^{\infty}a_{4,2}(n)q^n\equiv \dfrac{1}{f_3f_6}\displaystyle\left(\dfrac{f_6f_9^4}{f_3f_{18}^2}-qf_9f_{18}-2q^2\;\dfrac{f_3f_{18}^4}{f_6f_9^2}\right).
		\end{align*}
		We now extract the terms in which the exponents of $q$ are of the form $3n+1$, divide both sides by $q$, and then replace $q^3$ by $q$ to obtain
		\begin{align*}
			\sum_{n=0}^{\infty}	a_{4,2}(3n+1)q^n&\equiv -\dfrac{1}{f_1f_2}f_3f_6.
		\end{align*}
		Again, using \eqref{e4.4} in the above expression, we obtain
		\begin{align*}
			\sum_{n=0}^{\infty}	a_{4,2}(3n+1)q^n\equiv	-f_3f_6\displaystyle\left(\dfrac{f_9^9}{f_3^6f_6^2f_{18}^3}+q\;\dfrac{f_9^6}{f_3^5f_6^3}+3q^2\;\dfrac{f_9^3f_{18}^3}{f_3^4f_6^4}-2q^3\;\dfrac{f_{18}^6}{f_3^3f_6^5}+4q^4\;\dfrac{f_{18}}{f_3^2f_6^6f_9^3}\right).
		\end{align*}
		Extracting the terms in which the exponents of $q$ are of the form $3n+2$, we conclude that
		\begin{align*}
			a_{4,2}(9n+7)&\equiv 0\pmod3.
		\end{align*}
		Now, for all $n\ge0$, and all $k\ge j\geq 0$, we define
		\begin{align*}
			A(n):&=a_{4,2}(n)	\\
			B(n):&=a_{9(k-j)+4,\;9k+2}(n).
		\end{align*} Employing Lemma \ref{l2.5}, with $(p,\lambda)=(3,2)$ and $C=7$, we complete the proof of Theorem \ref{t1.10}.
	\end{proof}
	
	\begin{proof}[\textbf{Proof of Theorem \ref{t1.11}}]
		From \eqref{e1.4}, we have
		\begin{align*}
			\sum_{n=0}^{\infty}a_{5,1}(n)q^n&=\dfrac{1}{f_1f_2^4}\equiv \dfrac{1}{f_1f_2}\dfrac{1}{f_6}.
		\end{align*} 
		Employing \eqref{e4.4}, we have 
		\begin{align*}
			\sum_{n=0}^{\infty}a_{5,1}(n)q^n\equiv\dfrac{1}{f_6}\displaystyle\left(\dfrac{f_9^9}{f_3^6f_6^2f_{18}^3}+q\;\dfrac{f_9^6}{f_3^5f_6^3}+3q^2\;\dfrac{f_9^3f_{18}^3}{f_3^4f_6^4}-2q^3\;\dfrac{f_{18}^6}{f_3^3f_6^5}+4q^4\;\dfrac{f_{18}}{f_3^2f_6^6f_9^3}\right).\hspace{-0.4cm}
		\end{align*}
		Extracting the terms in which the exponents of $q$ are of the form $3n+2$, we conclude that
		\begin{align*}
			a_{5,1}(3n+2)&\equiv 0 \pmod3.
		\end{align*}
		Again, for all $n\ge0$, and all $k\ge j\geq 0$, we define
		\begin{align*}
			A(n):&=a_{5,1}(n)	\\
			B(n):&=a_{3(k-j)+5,\;3k+1}(n).
		\end{align*} Employing Lemma \ref{l2.5}, with $(p,\lambda)=(3,1)$ and $C=2$, we complete the proof of Theorem \ref{t1.11}.
	\end{proof}
	
	\begin{proof}[\textbf{Proof of Theorem \ref{t1.12}}]
		From \eqref{e1.4}, we have
		\begin{align*}
			\sum_{n=0}^{\infty}a_{5,1}(n)q^n&=\dfrac{1}{f_1f_2^4}\equiv \dfrac{1}{f_3f_6}\dfrac{f_1^2}{f_2}.
		\end{align*} 
		Employing \eqref{e2.5}, we have
		\begin{align*}
			\sum_{n=0}^{\infty}a_{5,1}(n)q^n&\equiv \dfrac{1}{f_3f_6}\displaystyle\left(\dfrac{f_9^2}{f_{18}}-2q\;\dfrac{f_3f_{18}^2}{f_6f_{9}}\right).
		\end{align*} 
		Extracting the terms in which the exponents of $q$ are of the form $3n$, we get
		\begin{align*}
			\sum_{n=0}^{\infty}	a_{5,1}(3n)q^n&\equiv \dfrac{1}{f_1f_2}\dfrac{f_3^2}{f_6}.
		\end{align*}
		We now use \eqref{e4.4} to obtain
		\begin{align*}
			\sum_{n=0}^{\infty}	a_{5,1}(3n)q^n\equiv \dfrac{f_3^2}{f_6}\displaystyle\left(\dfrac{f_9^9}{f_3^6f_6^2f_{18}^3}+q\;\dfrac{f_9^6}{f_3^5f_6^3}+3q^2\;\dfrac{f_9^3f_{18}^3}{f_3^4f_6^4}-2q^3\;\dfrac{f_{18}^6}{f_3^3f_6^5}+4q^4\;\dfrac{f_{18}}{f_3^2f_6^6f_9^3}\right).
		\end{align*}
		Extracting the terms in which the exponents of $q$ are of the form $3n+2$, we conclude that
		\begin{align*}
			a_{5,1}(9n+6)&\equiv 0 \pmod 3.
		\end{align*}
		Again, for all $n\ge0$, and all $k\ge j\geq 0$, we define
		\begin{align*}
			A(n):&=a_{5,1}(n)	\\
			B(n):&=a_{9(k-j)+5,\;9k+1}(n).
		\end{align*} Employing Lemma \ref{l2.5}, with $(p,\lambda)=(3,2)$ and $C=6$, we complete the proof of Theorem \ref{t1.12}.
	\end{proof}
	
\section{Proofs of Theorems \ref{t1.13} -- \ref{t1.20}}\label{s5}
In this section, all congruences hold modulo the prime $p$ as referenced in each of the theorems.
		\begin{proof}[\textbf{Proof of Theorem \ref{t1.13}}]
		From \eqref{e1.4} and Lemmas \ref{lepnt} and \ref{l2.7}, we have 
		\begin{align*}
			\sum_{n=0}^{\infty}a_{p-4,\;p}(n)q^n&=\dfrac{f_2^4}{f_1^p}\\
			&\equiv \dfrac{1}{f_p}f_2 f_2^3\\
			&=\dfrac{1}{f_p}\sum_{j=-\infty}^{\infty}\sum_{k=0}^{\infty}(-1)^{j+k}(2k+1)q^{(3j^2-j)+k(k+1)}.
		\end{align*} 
		We consider $pn+r\equiv (3j^2-j)+k(k+1)\pmod p$ which is equivalent to $12r+4\equiv (6j-1)^2+3(2k+1)^2\pmod p$. It is clear that $-3$ is a quadratic non-residue modulo $p$, since $p\equiv 5$ or $11$. So we get $6j-1\equiv 2k+1\equiv 0\pmod p$, as $p\mid 3r+1$. Given the factor $(2k+1)$ inside the sum above, we prove
		\begin{align*}
			a_{p-4, p}(pn+r)&\equiv 0\pmod p,
		\end{align*}
		which is $j=k=0$ case of \eqref{en1.13}.\\
		Again, for all $n\ge0$, and all $k\ge j\geq 0$, we define
		\begin{align*}
			A(n):&=	a_{p-4, p}(n)	\\
			B(n):&=a_{p(k-j)+(p-4),\;pk+p}(n).
		\end{align*} 
		Using Lemma \ref{l2.5}
		, we complete the proof of Theorem \ref{t1.13}.
	\end{proof}

	\begin{proof}[\textbf{Proof of Theorem \ref{t1.15}}]
		From \eqref{e1.4} and Lemma \ref{l2.7}, we have
		\begin{align*}
			\sum_{n=0}^{\infty}a_{p-3,p}(n)q^n&=\dfrac{f_2^3}{f_1^p}\\
			&\equiv\dfrac{f_2^3}{f_p}\\
			&=\dfrac{1}{f_p}\sum_{n=0}^{\infty}(-1)^n(2n+1)q^{n(n+1)}.
		\end{align*}
		To prove \eqref{e1.153}, for $j=k=0$, we note that $pn+r\equiv N(N+1)\pmod p$ is true if and only if $4r+1\equiv (2N+1)^2\pmod p$. Thus, we confirm the congruence
		\begin{align*}
			a_{p-3,p}(pn+r)\equiv 0\pmod p,
		\end{align*}
		since we have assumed that $4r+1$ is a quadratic non-residue modulo p.\\
		Again, for all $n\ge0$, and all $k\ge j\geq 0$, we define
		\begin{align*}
			A(n):&=a_{p-3,p}(n)	\\
			B(n):&=	a_{p(k-j)+(p-3),\;pk+p}(pn+r).
		\end{align*} 
		Employing Lemma \ref{l2.5}
		, we complete the proof of Theorem \ref{t1.15}.
	\end{proof}
	
	\begin{proof}[\textbf{Proof of Theorem \ref{t1.15b}}]
	We have already seen that 
		\begin{align*}
		\sum_{n=0}^{\infty}a_{p-3,p}(n)q^n\equiv\dfrac{1}{f_p}\sum_{n=0}^{\infty}(-1)^n(2n+1)q^{n(n+1)}.
	\end{align*}
	Clearly, $pn+r\equiv N(N+1)\pmod p$ holds if and only if $4r+1\equiv(2N+1)^2\pmod p$, which implies $2N+1\equiv 0\pmod p$. Given the factor $(2n+1)$ inside the congruence above, the divisibility \eqref{e1.152} for $j=k=0$ holds. Therefore, we obtain
	\begin{align*}
	a_{p-3,p}(pn+r)\equiv 0\pmod p.
	\end{align*}
	Again, for all $n\ge0$, and all $k\ge j\geq 0$, we define
	\begin{align*}
		A(n):&=a_{p-3,p}(n)	\\
		B(n):&=	a_{p(k-j)+(p-3),\;pk+p}(pn+r).
	\end{align*} 
	Employing Lemma \ref{l2.5}
	 , we complete the proof of Theorem \ref{t1.15b}.
	
	\end{proof}
	
	\begin{proof}[\textbf{Proof of Theorem \ref{t1.16}}]
		From \eqref{e1.4}, \eqref{et1}, and \eqref{et3}, it follows that
		\begin{align*}
			\sum_{n=0}^{\infty}a_{p-1,p-2}(n)q^n&=\dfrac{1}{f_1^{p-2}f_2}\\
			&\equiv\dfrac{1}{f_p}\dfrac{f_1^2}{f_2}\\
			&=\dfrac{1}{f_p}\phi(-q)\\
			&=\dfrac{1}{f_p}\sum_{k=-\infty}^{\infty}(-q)^{k^2}.
		\end{align*}
		Note that $pn+r\equiv k^2 \pmod p$, which is equivalent to $r\equiv k^2\pmod p$, can never have a solution since $r$ is a quadratic non-residue modulo $p$. Hence, for all $n\geq 0$, 
		\begin{align*}
			a_{p-1,p-2}(pn+r)&\equiv 0\pmod p,
		\end{align*} 
		which is \eqref{e1.161} for $j=k=0$.\\
		Again, for all $n\ge0$, and all $k\ge j\geq 0$, we define
		\begin{align*}
			A(n):&=a_{p-1,p-2}(n)	\\
			B(n):&=	a_{p(k-j)+(p-1),\;pk+(p-2)}(n).
		\end{align*}
		Employing Lemma \ref{l2.5}
		, we complete the proof of Theorem \ref{t1.16}.
	\end{proof}	
	
	\begin{proof}[\textbf{Proof of Theorem \ref{t1.18}}]
		Thanks to \eqref{e1.4} and Lemma \ref{lepnt}, we have
		\begin{align*}
			\sum_{n=0}^{\infty}a_{p-1,p}(n)q^n&=\dfrac{f_2}{f_1^p}\\
			&\equiv \dfrac{1}{f_{p}}f_2\\
		&=\dfrac{1}{f_p}\sum_{k=-\infty}^{\infty}(-1)^kq^{3k^2-k}.
		\end{align*}
		In order to prove \eqref{e1.18}, for $j=k=0$, we consider $pn+r\equiv 3k^2-k\pmod p$ which is equivalent to $12r+1\equiv (6k-1)^2\pmod p$. This congruence fails to hold since $12r+1$ is assumed to be a quadratic non-residue modulo $p$. Thus, we deduce
		\begin{align*}
			a_{p-1,\;p}(pn+r)\equiv 0\pmod p.
		\end{align*}
		Again, for all $n\ge0$, and all $k\ge j\geq 0$, we define
		\begin{align*}
			A(n):&=a_{p-1,\;p}(n)	\\
			B(n):&=	a_{p(k-j)+(p-1),\;pk+p}(n).
		\end{align*} 
		Employing Lemma \ref{l2.5}
		, we complete the proof of Theorem \ref{t1.18}.
	\end{proof}
	
	\begin{proof}[\textbf{Proof of Theorem \ref{t1.19}}]
		From \eqref{e1.4} and \eqref{et2}, we have 
		\begin{align*}
			\sum_{n=0}^{\infty}a_{p-1,\;p+1}(n)q^n&=\dfrac{f_2^2}{f_1^{p+1}}\\
			&\equiv\dfrac{1}{f_p}\sum_{k=0}^{\infty}q^{k(k+1)/2}.
	\end{align*} 
	To establish \eqref{e1.19} for $j=k=0$, we focus on $pn+r\equiv \frac{k(k+1)}{2}\pmod p$ which is same as $8r+1\equiv (2k+1)^2\pmod p$. This congruence cannot hold, since $8r+1$ is a quadratic non-residue modulo $p$. This yields
		\begin{align*}
			a_{p-1, p+1}(pn+r)&\equiv 0\pmod p.
		\end{align*}
		Again, for all $n\ge0$, and all $k\ge j\geq 0$, we define
		\begin{align*}
			A(n):&=	a_{p-1,\; p+1}(n)	\\
			B(n):&=a_{p(k-j)+(p-1),\;pk+(p+1)}(n).
		\end{align*} 
		Employing Lemma \ref{l2.5}
		, we complete the proof of Theorem \ref{t1.19}.
	\end{proof}
	 	\begin{proof}[\textbf{Proof of Theorem \ref{t1.20}}]
	 	From \eqref{e1.4} and Lemma \ref{lepq}, we have
	 	\begin{align*}
	 		\sum_{n=0}^{\infty}a_{p-3,p+2}(n)q^n&=\dfrac{f_2^5}{f_1^{p+2}}\\
	 		&\equiv \dfrac{f_2^5}{f_pf_1^2}\\
	 		&=\dfrac{1}{f_p}\sum_{k=-\infty}^{\infty}(-1)^k(3k+1)q^{3k^2+2k}.
	 	\end{align*}
	 	We see that $3k^2+2k\equiv pn+r \pmod p$ holds if and only if $(3k+1)^2\equiv 3r+1 \pmod p$. This can never have a solution since $3r+1$ is assumed to be a quadratic non-residue modulo $p$ or $3r+1\equiv 0\pmod p$. Thus, we prove
	 	\begin{align}
	 		a_{p-3,\;p+2}(pn+r)\equiv 0 \pmod p,
	 	\end{align}
	 	which is \eqref{e1.20} for $j=k=0$.\\
	 	Again, for all $n\ge0$, and all $k\ge j\geq 0$, we define
	 	\begin{align*}
	 		A(n):&=a_{p-3,p+2}(n)	\\
	 		B(n):&=	a_{p(k-j)+(p-3),\;pk+(p+2)}(pn+r).
	 	\end{align*} 
	 	Employing Lemma \ref{l2.5}
	 	, we complete the proof of Theorem \ref{t1.20}.
	 \end{proof}

	\bigskip
	\bigskip
	
	\noindent\textsuperscript{1,3}Ramanujan School of Mathematical Sciences,\\Department of Mathematics,\\ Pondicherry University,\\ Puducherry - 605014, India. \\

	\bigskip
	\noindent\textsuperscript{2}Department of Mathematics and Statistics, \\University of Minnesota Duluth,\\Duluth, MN 55812, USA.\bigskip\\
	
	\noindent Email: \texttt{tthejithamp@pondiuni.ac.in} \\
	Email: \texttt{jsellers@d.umn.edu} (\Letter)\\
	Email: \texttt{dr.fathima.sn@pondiuni.ac.in}

\end{document}